\documentclass{amsart}

\usepackage{amsmath}
\usepackage{paralist}
\usepackage{graphics} 
\usepackage{epsfig} 
\usepackage{graphicx}  
\usepackage{epstopdf}
\usepackage[colorlinks=true]{hyperref}
\hypersetup{urlcolor=blue, citecolor=red}

\usepackage[utf8]{inputenc}
\usepackage{amsfonts}
\usepackage{amsmath}
\usepackage{graphicx}
\usepackage{amsthm}
\usepackage{pdfpages}
\usepackage{MnSymbol}

\usepackage[bottom = 4cm]{geometry}

\usepackage{bbm}
\usepackage{cite}
\usepackage{mathtools}
\usepackage{afterpage}
\usepackage{float}

\usepackage[pagewise]{lineno}

\numberwithin{equation}{section}

\newtheorem{theorem}{Theorem}[section]

\newtheorem{lemma}[theorem]{Lemma}
\newtheorem{proposition}[theorem]{Proposition}

\newtheorem{assumption}[theorem]{Assumption}

\theoremstyle{definition}
\newtheorem{definition}[theorem]{Definition}

\theoremstyle{remark}
\newtheorem{remark}[theorem]{Remark}

\numberwithin{figure}{section}

\newcommand{\R}{\mathbb R}

\newcommand{\N}{\mathbb N}
\newcommand{\supp}{\text{supp}}

\newcommand{\dy}{{\rm d} y}
\newcommand{\dx}{{\rm d} x}

\newcommand{\su}{\overline{u}}

\newcommand{\DN}{N}
\newcommand{\OS}{\Omega \subset \R^\DN}

\newenvironment{nalign}{
	\begin{equation}
	\begin{aligned}
}{
	\end{aligned}
	\end{equation}
	\ignorespacesafterend
}

\newcommand{\cl}[1]{{#1}}
\newcommand{\ii}[1]{{#1}}

\title[\cl{nonlocal Kondo model}] 
{Pattern formation in \cl{nonlocal Kondo model}}
\author[Szymon Cygan]{}
\subjclass{35B36, 35Q92, 92C15.}
\keywords{Kondo model, patterns formation, convolution operators, Rabinowitz bifurcation theorem, Schauder fixed point theorem, numerical simulations}

\email{Szymon.Cygan@math.uni.wroc.pl}

\thanks{This work was partially supported by the NCN grant 2016/23/B/ST1/00434.}

\thanks{The author is grateful to Grzegorz Karch for his comments on this work.}

\thanks{This work does not have any conflicts of interest}

\begin{document}
	
\maketitle

\centerline{\scshape Szymon Cygan}
\medskip
{\footnotesize
	\centerline{Instytut Matematyczny, Uniwersytet Wrocławski}
	\centerline{pl. Grunwaldzki 2/4}
	\centerline{ Wrocław 50-384, Poland}
} 

\bigskip

\begin{abstract}
	We study a nonlocal evolution equation generalising a model introduced \cl{by Shigeru Kondo} to explain colour patterns on a skin of \cl{a} guppy fish. We prove \cl{the existence} of stationary solutions using either the bifurcation theory or the Schauder fixed point theorem. We also present numerical studies of this model and show that it exhibits patterns similar to those modelled by well-known reaction-diffusion equations.
\end{abstract}

\section{Introduction}
\label{chap:Introduction}

The goal of this work is to study analytically and numerically stationary solutions to the following initial value problem for an unknown scalar function $u=u(x,t)$
\begin{nalign}
	\label{eq:ProblemFormulation}
	&u_t = -au + f(Tu), && x\in \Omega, \quad t > 0, \\
	&u(x,0) = u_0(x), && x\in \Omega,
\end{nalign}
where $\Omega\subset \R^N$ is bounded, $a>0$ and $T$ is an integral operator. We assume that $f\in C^2$ in the neighbourhood of 0, $f(0) = 0$ and $f'(0) \neq 0$. This is a \cl{minor} generalisation of a model introduced by Shigeru Kondo which we recall and discuss in Subsection~\ref{chap:BiologicalMotivation}, below. 


\cl{Models as those in \eqref{eq:ProblemFormulation} have been introduced in the works  \cite{BR0165423,BR0784129} and the corresponding solutions have been studied numerically only. Our first goal is to prove rigorous mathematical results which we introduce in Section \ref{chap:MainResults}.}
We prove \cl{the existence} of stationary solutions to problem \eqref{eq:ProblemFormulation} using two methods. First, we apply the bifurcation theory to prove \cl{the existence} of \textit{{small}} nonconstant stationary solutions. Next, we construct \textit{{large}} nonconstant stationary solutions using the Schauder fixed point theorem \cl{(see Remark \ref{LargeSmall} for the definitions of \ii{large} and \ii{small} solutions)}. \cl{Then,} in Section~\ref{chap:NumericalSimulations} we present numerical simulations of solutions to model \eqref{eq:ProblemFormulation} and we discuss those numericals results obtained for \cl{a} various parameter range. Numerical simulations indicate that under specific conditions we may obtain diversified patterns, namely nonconstant stationary solutions. In Section~\ref{chap:MathematicalResults} we prove mathematical results stated in Section \ref{chap:MainResults}.

\subsection{Kondo model}
\label{chap:BiologicalMotivation}
The mathematical model proposed by Kondo \cite{BR0784129} is a nonlocal differential equation of the form \eqref{eq:ProblemFormulation} where $u = u(x,t)$ describes the concentration of a specific substance in a fish skin, where a fish surface is a bounded and connected subset of plane $\Omega \subset \R^2$. In that model, the substance production rate results from destruction law and cell synthesis law. The \cl{degradation} law states that production rate is negatively impacted by the substance density. Cell synthesis law states that production rate is influenced by the substance distribution over the surface. This phenomenon is described by the differential equation 
\begin{nalign}
	\label{eq:KondoRawModel}
	\frac{\partial{u}}{\partial t} = S(u)-au,
\end{nalign}
where $a>0$ is a constant cell \cl{degradation} rate and $S$ corresponds to a cell synthesis. A cell synthesis is a process of sophisticated cell interactions, dependent on stimulation operator. Kondo claimed that cell influence on neighbours production rate depends only on distance between cells. Thus, a cell synthesis is modelled as a convolution with a radial kernel $K$
\begin{nalign}
	\label{eq:KondoRawModelKernel}
	Stim(x,y) = \iint u(x-\xi, y - \eta) K(\sqrt{\xi^2 + \eta^2}) \text{d}\xi \text{d}\eta.
\end{nalign}
To ensure that the cell density \cl{is bounded}, the cell synthesis follows the saturation law, which in the work by Kondo is given by the following formula
\begin{nalign}
	S = \begin{cases}
		0, & Stim < 0, \\
		Stim, &  0<Stim \leqslant M, \\
		M, & Stim > M.
	\end{cases}
\end{nalign} 
This saturation function states that the impact of cell density on a production rate is smaller then $M$ and cannot be negative. In biological models considered by Kondo, the kernel $K = K(r)$ is designed to have the \cl{both positive and negative part, which means} that cells can either increase or decrease neighbours production rate. The positive impact is called activation and negative is called inhibition.

\subsection{Other nonlocal models}
\label{chap:AlternativeModels} 
The nonlocal models with convolution kernels are widely used in various fields such as genetics, neurology and ecology. For example, Amari \cite{MR0681526} modelled the dynamics of neuron fields in the brain using the following equation
\begin{align*}
\tau u_t = -u + w* H(u) +  s, \quad \text{for} \quad x\in \R,
\end{align*}
where $u(x,t)$ is the membrane potential of the neurons, $w$ is the convolution kernel, $s$ describes the external stimuli and $H$ is the Heaviside function. The convolution operator represents the influence of cells in the neighbourhood on the membrane potential. 

The following equation in another model of a nonlocal spatial dispersal 
\begin{align*}
u_t = k*u - bu + f(u),
\end{align*}
where $u(x,t)$ denotes the population density, $k$ is the convolution kernel, $b$ is the positive constant and $f$ is the nonlinear function. The kernel $k$ corresponds to the transition possibility and $b$ describes the degradation rate. This model was studied by Hutson et al. \cite{MR2028048} where, despite the behaviour of this model is similar to the corresponding reaction diffusion system, it is more suitable to describe a single species dispersal.

Berestycki {\it et al.} \cite{MR2557449} analysed the nonlocal Fisher-KPP equation for a population dynamics with nonlocal interactions
\begin{nalign}
u_t &= \Delta u + \mu u (1  - k*u), && x\in \R^d
\end{nalign}
and showed that this model has travelling waves, similarly as is \cl{for} the classical Fisher equation.

Another important mathematical result comes from the work by Ninomiya et al. \cite{MR3694699} who studied an extension of a reaction-diffusion system to the nonlocal evolution equation on the one dimensional torus
\begin{nalign}
&u_t = \Delta u + g(u, J*u), && x\in \mathbb{T}, \quad t > 0, \\
&u_0(x) = u(x,0), &&  x\in \mathbb{T}. 
\end{nalign}
They proved that under particular conditions, this nonlocal model can be approximated by solutions to classical reaction-diffusion systems.

\cl{In comparison to the models described above, the Kondo equation contains the truncation function applied to the convolution operator. This truncation plays the crucial role in the process of pattern formation and this phenomenon was not studied in the case of other models.}


\section{Main results}
\label{chap:MainResults}
In this work, we consider the following initial value problem
\begin{nalign}
	\label{eq:MainResultProblemDefinition}
	&u_t = -au + f(Tu), && x\in \Omega, \quad t > 0, \\
	&u(x,0) = u_0(x), && x\in \Omega.
\end{nalign}
with an unknown scalar function $u = u(x,t)$. Here $\Omega \subset \R^\DN$ is a bounded open set, $a>0$ is a constant and function $f:\R\to \R$ is Lipschitz. Nonlocal effects in this equation are described by a linear operator $T:L^2(\Omega) \to L^2(\Omega)$ given by the formula  
\begin{nalign}
	\label{eq:OperatorTDefinition}
	Tu(x) = \int_\Omega K(x-y) u(y) \dy
\end{nalign}
with $K\in L^2(\R^\DN)$ satisfying $K(z) = K(-z)$ for all $z\in \R^\DN$. 

\begin{remark}
		There exists a unique global-in-time solution $u\in C([0,\infty), L^2(\Omega))$ of problem \eqref{eq:MainResultProblemDefinition} for each initial condition $u_0 \in L^2(\Omega)$ because $f$ is assumed to be a globally Lipschitz function. This is a standard result involving the Banach contraction principle. Moreover one can easily prove that if $|f(x)| \leqslant M$ for all $x\in \R$, then $|u(x,t)| \leqslant \max\left\lbrace \|u_0\|_{\infty},\, \frac{M}{a} \right\rbrace$ for all $(x,t) \in \Omega \times [0,\,\infty)$.
\end{remark}

\begin{remark}
	\label{rem:OrthonormalBasis}
Since $\OS$ is bounded, the operator $T : L^2(\Omega) \to L^2(\Omega)$ given by equation \eqref{eq:OperatorTDefinition} is symmetric and compact. By the spectral theory for such operators, there exists a sequence of eigenvalues $\lbrace \lambda_j \rbrace_{j=1}^\infty \subset \R$ satisfying $\lambda_j \to 0$ and an orthonormal basis $\lbrace e_j \rbrace_{j=1}^\infty$ of $L^2(\Omega)$ of eigenfunctions of the operator $T$, namely, we have $Te_j = \lambda_j e_j$ for every $j\in \N$.
\end{remark}

First, we consider the linear counterpart of problem \eqref{eq:MainResultProblemDefinition}
\begin{nalign}
		\label{eq:MainResultLinearProblemDefinition}
	&u_t = -au + b\cdot Tu, & x\in \Omega, &&  t > 0, \\
	&u(x,0) = u_0(x),  & x\in \Omega,
\end{nalign}
with $b \in \R\setminus \lbrace 0 \rbrace$. Notice that, if $\frac{a}{b} = \lambda_k$ for some $k\in \N$, then the eigenfunction $e_k$ corresponding to eigenvalue $\lambda_k$ is a nonconstant stationary solution of the first equation in \eqref{eq:MainResultLinearProblemDefinition}. In the following proposition we find a condition under which this stationary solution is stable.
\begin{proposition}
	\label{thm:KondoModelLinearizedStability}
	Let $\su = e_k$ be a nonzero stationary solution of linear problem \eqref{eq:MainResultLinearProblemDefinition} where $ e_k$ is an eigenfunction of the operator $T$ corresponding to the eigenvalue $\lambda_k = \frac{a}{b}$. The solution $\su$ is stable if and only if $b\lambda_j \leqslant a$ for each $j\in \N$.
\end{proposition}

\noindent
The proof of this proposition is postponed to Section \ref{chap:LinearizedModel}. Now, we only notice that, by Proposition~\ref{thm:KondoModelLinearizedStability} if $b>0$ then $\su = e_k$ is stable if and only if $\lambda_k = \frac{a}{b}$ is the biggest eigenvalue of $T$. On the other hand, if $b<0$ then $\su = e_k$ is stable if and only if $\lambda_k = \frac{a}{b}$ is the smallest eigenvalue of $T$.

Next, we study stationary solutions of the nonlinear problem \eqref{eq:MainResultProblemDefinition}, namely we consider the nonlinear and nonlocal equation
\begin{nalign}
	\label{def:StationarySolutionBifurcation}
	0 = -a\su + f(T\su), \quad x\in \Omega
\end{nalign}
with an unknown function $\su = \su(x)$ and $\Omega \subset \R^\DN$. 

First, we prove \cl{the existence} of solutions to this equation in a neighbourhood of zero solution by using \cl{the} bifurcation theory. Since, we apply methods from the theory of elliptic equations and variational methods we need to impose additional assumptions for the eigenvalues of the operator~$T$. 
\begin{assumption}
	\label{Ass:NonlinearModelSpectralProperties}
	Let $\lbrace \lambda_j \rbrace_{j=1}^\infty$ be the set of eigenvalues from Remark~\ref{rem:OrthonormalBasis}. We assume that $\lambda_j \neq 0$ for each $j\in \N$ and $\lambda_j<0$ for a finite number of eigenvalues. 
\end{assumption}

\begin{theorem}
	Let Assumption \ref{Ass:NonlinearModelSpectralProperties} be satisfied and denote by $d>0$ an arbitrary constant such that $d+a\lambda_j >0$ for each $j\in \N$. Assume $f\in C_b^2$ satisfies $f(0) = 0$ and $\lambda_k = \frac{a}{f'(0)}$ for some $k\in \N$. There exists a sequence $\lbrace c_n\rbrace_{n=1}^\infty\subset \R$ converging to $1$ and a sequence of nonconstant functions $\lbrace \su_n \rbrace_{n=1}^\infty \subset L^2(\Omega)$,  such that $(\su_n)$ is a weak solution of 
	\begin{nalign}
		\label{eq:KondoModelNonlinearSequence}
		0 =& -a c_n \su_n  + f(T\su_n) +d(1-c_n)  T\su_n ,
	\end{nalign}
	for each $n\in\N$.
	\label{thm:KondoModelNonlinearExistence} 
\end{theorem}
\noindent
We postpone the proof of this theorem to Section \ref{chap:BifurcationTheorem}, where solutions are constructed by variational methods and the Rabinowitz bifurcation theorem \cite{MR0463990}. Note that nonzero solutions of equation \eqref{eq:KondoModelNonlinearSequence} are obtained \cl{from the} bifurcation of a nonzero solution to the linear equation	$0 = - a \su + f'(0) T\su$.

Nontrivial solutions in Theorem \ref{thm:KondoModelNonlinearExistence} are constructed in a small neighbourhood of zero solution. Now, we construct \ii{large} solutions and for simplicity of the exposition, we consider the one dimensional problem with an open set $\Omega \subset \R$ and the function $f$ given explicitly by  
\begin{nalign}\label{eq:NonlinearModelFdefinition}
	f(x) = \begin{cases}
		1, & x>1,\\
		x, & x\in[-1,\;1],\\
		-1, & x<-1.
	\end{cases} 
\end{nalign}
\cl{
\begin{remark}
	\label{LargeSmall}
	In this work we deal with \textit{large} and \textit{small} solutions. Here, $u$ is called \textit{small} if $\big|T(u)\big|<1$ and for the nonlinearity given by \eqref{eq:NonlinearModelFdefinition} we have $f\big( T(u)\big) = T(u)$. Otherwise the solution is called \textit{large} and the nonlinearity truncates $T(u)$ at the levels $\pm 1$.   
\end{remark}
}
 \begin{assumption}
 	\label{ass:KernelSchauder}
 We assume that the kernel $K(x) = K(|x|)$ is even and compactly supported. We decompose the kernel as follows
 \begin{align*}
 K = K_+ + K_- \quad \text{with} \quad K_+ = \max(K , 0) \quad \text{and} \quad K_- = \min(K,0)
 \end{align*}  
 where
\begin{itemize}	
	\item $\supp(K_+) \subset [-2,\; 2]$,
	\item $\supp(K_-) \subset [-4, \; -2] \cup [2,\; 4]$.
\end{itemize}
\end{assumption}

Examples of kernels with properties in Assumption \ref{ass:KernelSchauder} are presented in Fig \ref{fig:KondoModelSimulationsConvolutionKernels}. In the following theorem we construct \ii{large} solutions to equation \eqref{def:StationarySolutionBifurcation} under additional constraints imposed on the kernel $K$.

\begin{theorem}
	Assume $\Omega = [-L, \, L]$ with $L\geqslant 6$. Let the function $f$ be given by formula~\eqref{eq:NonlinearModelFdefinition}. Let Assumption~\ref{ass:KernelSchauder} holds true. Suppose, moreover, that the kernel $K$ satisfies one of the following conditions
	\begin{enumerate}
		\item either $K(x) \geqslant 0$ for all $x\in \R$ and \label{prop:Positive}
		\begin{align*}\int_\R K(x)\dx \geqslant 2a,
		\end{align*}
		\item or $K_+(x)$ is a nonincreasing function for $x\geqslant 0$ and  \label{prop:Nonincreasing}
		\begin{align*}
		\int_\R K(x)\dx \geqslant 2a,
		\end{align*}
	\end{enumerate}
	Then, there exists a nonconstant solution to equation \eqref{def:StationarySolutionBifurcation}.
	\label{thm:KondoModelSchauderNonNegative}
\end{theorem}
\begin{remark}
	The kernel $K_3$ on Fig. \ref{fig:KondoModelSimulationsConvolutionKernels} has the property \eqref{prop:Positive} in Theorem \ref{thm:KondoModelSchauderNonNegative} and the kernel $K_1$~--~property~\eqref{prop:Nonincreasing}.
\end{remark}
\noindent
For some nonpositive kernels, one can also construct periodic solutions for equation \eqref{def:StationarySolutionBifurcation} on the whole line.
\begin{theorem}
	Let $\Omega =  \R$. Assume that function $f$ is given by formula~\eqref{eq:NonlinearModelFdefinition}. Let Assumption~\ref{ass:KernelSchauder} holds true. If the kernel $K$ satisfies  
	\begin{itemize}
	\item $K(x) \leqslant 0$ for all $x\in \R$,
	\item  $\int_\R K(x)\dx \leqslant -2a$, 
	\item $K(x) = K(6-x)$ for all $x\in [2,\, 4]$,
	\item $K(x) = K(6+x)$ for all $x\in [-4,\, -2]$
	\end{itemize} 
	then there exists a nonconstant periodic solution to equation~\eqref{def:StationarySolutionBifurcation} considered on the whole line $\Omega = \R$.
	\label{thm:KondoModelNegativeKernelsSchauder} 
\end{theorem}
\begin{remark}
	The kernel $K_4$ on Fig. \ref{fig:KondoModelSimulationsConvolutionKernels} satisfies all assumptions {of} Theorem \ref{thm:KondoModelNegativeKernelsSchauder}.
\end{remark}
\noindent
Stationary solutions in Theorem \ref{thm:KondoModelSchauderNonNegative} and Theorem \ref{thm:KondoModelNegativeKernelsSchauder} are obtained via the Schauder fixed point theorem and we postpone the proofs to Section \ref{chap:SchauderTheorem}. 

\begin{remark}
	Equation \eqref{def:StationarySolutionBifurcation} reduces to the Kondo model \cite{BR0784129} in the case of the cut-off function
	\begin{align*}
	f(x) = \begin{cases}
	1, & x>1,\\
	x, & x\in[0,\;1],\\
	0, & x<0.
	\end{cases} 	
	\end{align*}
	Kernels considered in \cite{BR0784129} are always sign changing functions because in the case of either nonpositive or nonnegative kernels, no nonconstant stationary solutions have been observed numerically. In our case of the function $f(x)$ given by \eqref{eq:NonlinearModelFdefinition} we observe \cl{patterns} also for nonpositve and nonnegative kernels.
\end{remark}

\begin{remark}
	Ideas from the proof of Theorem \ref{thm:KondoModelSchauderNonNegative} and Theorem \ref{thm:KondoModelNegativeKernelsSchauder} in one dimensional case can be used to obtain pattern in two dimensions, but this would require more involved assumptions for convolution kernels. Numerical simulations (Fig. \ref{fig:KondoModelSimulations2DKernelK1Pattern} --  \ref{fig:KondoModelSimulations2DKernelK5Pattern} ) indicate that a type of a convolution kernel strongly influences a shape of a transient gap, however this topic requires further investigation. 
\end{remark}


\section{Numerical simulations}
\label{chap:NumericalSimulations}

\subsection{Description of the problem}

Here we illustrate theoretical results from the previous section by presenting numerical simulations of solution to model \eqref{eq:MainResultProblemDefinition} with $a=1$ and with a suitable odd, monotone, nondecreasing function $f$. More precisely, we consider the problem

\begin{nalign}
	\label{eq:KondoModelNonlinearSimulationsDefinition}
	&u_t = -u + f(Tu) & x\in \Omega, && t>0,  \\ 
	&u_0 = u(0,x), & x\in \Omega,	
\end{nalign}
with

\begin{nalign}
	\label{eq:FunctionFDefiniotonNumerical}
	f(x) = \begin{cases}
		-1, & b x < -1, \\
		b x, &  -1 \leqslant b  x \leqslant 1, \\
		1, & b x > 1,
	\end{cases}
\end{nalign} 
and with a parameter $b>0$. 

We consider the one dimensional and two dimensional version of this problem. In one dimension, we choose $\Omega = [-50,\, 50]$ and the convolution kernel satisfying $\supp(K) \subset [-4, \, 4]$. An initial condition $u_0(x)$ is either random with values from the interval $[-1, \, 1]$ or it is given by
\begin{nalign}
	\label{eq:StepInitialCondition1D}
	u_0 = \begin{cases}
		-1, & x\geqslant 0, \\
		1, & x < 0. \\
	\end{cases}
\end{nalign}
 In the two dimensional case, we set $\Omega = [-50,\, 50]^2$ and here we assume that $\supp(K) \subset [-4, \, 4]^2$. Initial condition $u_0(x)$ is either random from interval $[-1, \, 1]$ or it is given by
 \begin{nalign}
 		\label{eq:StepInitialCondition2D}
 	u_0(x) = \begin{cases}
 		1, & x\in [2, \, 2]^2, \\
 		-1, & x\notin [2, \, 2]^2 \\
 	\end{cases}
 \end{nalign}

\subsection{Numerical scheme}
\label{SubSec:NumericalScheme}

We approximate solution to problem \eqref{eq:KondoModelNonlinearSimulationsDefinition} on a grid consisting 601 uniformly distributed points in one dimension and $601\times 601$ points in the two dimensional case. We use the explicit Euler method with a fixed time step $dt>0$
\begin{nalign}
	\label{eq:NumericalScheme}
	u_{n+1}  &= u_n + dt\big(-a u_n + f(\tilde{T}u_n)\big),
\end{nalign}
where $\tilde{T}$ denotes the discrete version of the convolution operator $T$ given by the formula \eqref{eq:OperatorTDefinition}
\begin{nalign}
	\tilde{T}u_n(x) = \sum_{i\in -300}^{300} K(i) u_n(x - i)
\end{nalign}
in one dimension and with the radial extension in two dimensional case.

\subsection{\cl{Summary of numerical observations}}
\cl{
	Let us summarise the most important properties of numerical stationary solutions to problem \eqref{eq:KondoModelNonlinearSimulationsDefinition} obtained via scheme \eqref{eq:NumericalScheme} for large~$n$. We claim that $\overline{u} = u_n$ is a numerical stationary solution to problem \eqref{eq:KondoModelNonlinearSimulationsDefinition} if $\|u_{n} - u_{n-1}\|_\infty \leqslant \eta$ where $\eta=10^{-6}$ is our premised precision. We present numerical stationary solutions obtained for different types of convolutions kernels, shown in Fig. \ref{fig:KondoModelSimulationsConvolutionKernels} and  Fig \ref{fig:KondoModelSimulations2DConvolutionKernels}, two values of parameter $b$ and various types of initial conditions, random or step-like functions \eqref{eq:StepInitialCondition1D}-\eqref{eq:StepInitialCondition2D}.
}

\subsection{One dimensional simulations}
We present numerical simulations of solution to problem~\eqref{eq:KondoModelNonlinearSimulationsDefinition} for kernels $K_1$, $K_2$, $K_3$ and $K_4$ shown on Fig. \ref{fig:KondoModelSimulationsConvolutionKernels}. The kernel $K_1$ has positive part concentrated near zero which corresponds to a local activation. On the other hand, the negative part of $K_1$ describes long range inhibition. The kernel $K_2$ has positive and negative part as well, but its positive part is not supported in the neighbourhood of zero. Here, our numerical simulations show that this property of the kernel modifies a shape of obtained patterns. The kernel $K_3$ has only a positive part, thus the inhibition process is absent. Finally the kernel $K_4$ with only the negative part describes only an inhibition process. Our numerical simulations shows that, even if the kernel $K_4$ does not stimulate $u$ to rise, we still obtain some patterns.

\begin{figure}[!h]
	\begin{center}
		\includegraphics[width=1\textwidth]{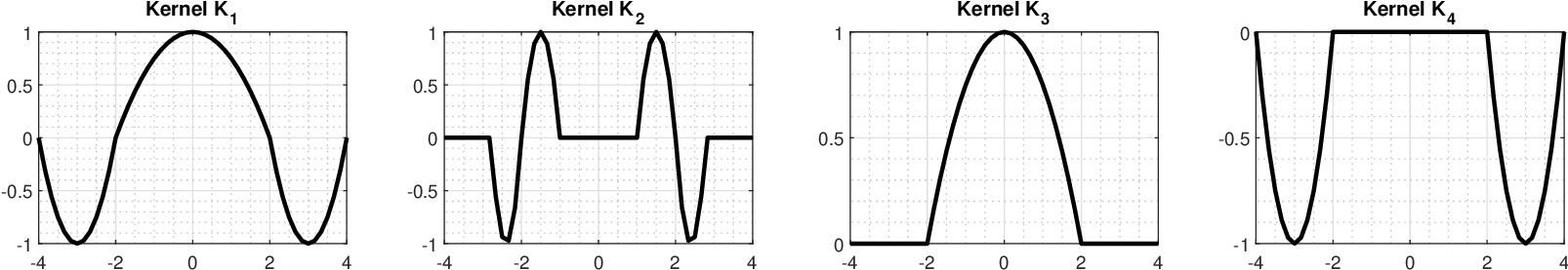}
	\end{center}
	\caption{The kernels $K_1$, $K_2$, $K_3$ and $K_4$ used in one dimensional numerical simulations.}
	\label{fig:KondoModelSimulationsConvolutionKernels}
\end{figure}

We obtain two types of stable stationary solutions. \ii{Small} stationary solutions are of the first type and their existence have been proved in Theorem~\ref{thm:KondoModelNonlinearExistence}. Since these solutions do not touch the truncation level of the function $f$ in \eqref{eq:FunctionFDefiniotonNumerical}, they are solutions to the linear problem considered in Proposition~\ref{thm:KondoModelLinearizedStability}. Fig.~\ref{fig:KondoModelSimulationsLinearEigenfunctions1D} shows such solutions obtained for the kernels $K_1$, $K_2$, $K_3$ and $K_4$. In order to have such solutions, we choose random initial condition in scheme \eqref{SubSec:NumericalScheme} and we set $b = \frac{1}{\lambda_k}$, where $\lambda_k$ is the maximal eigenvalue of the convolution operator~$T$, obtained through numerical approximation.

\begin{figure}[!h]
	\begin{center}
		\includegraphics[width=1\textwidth]{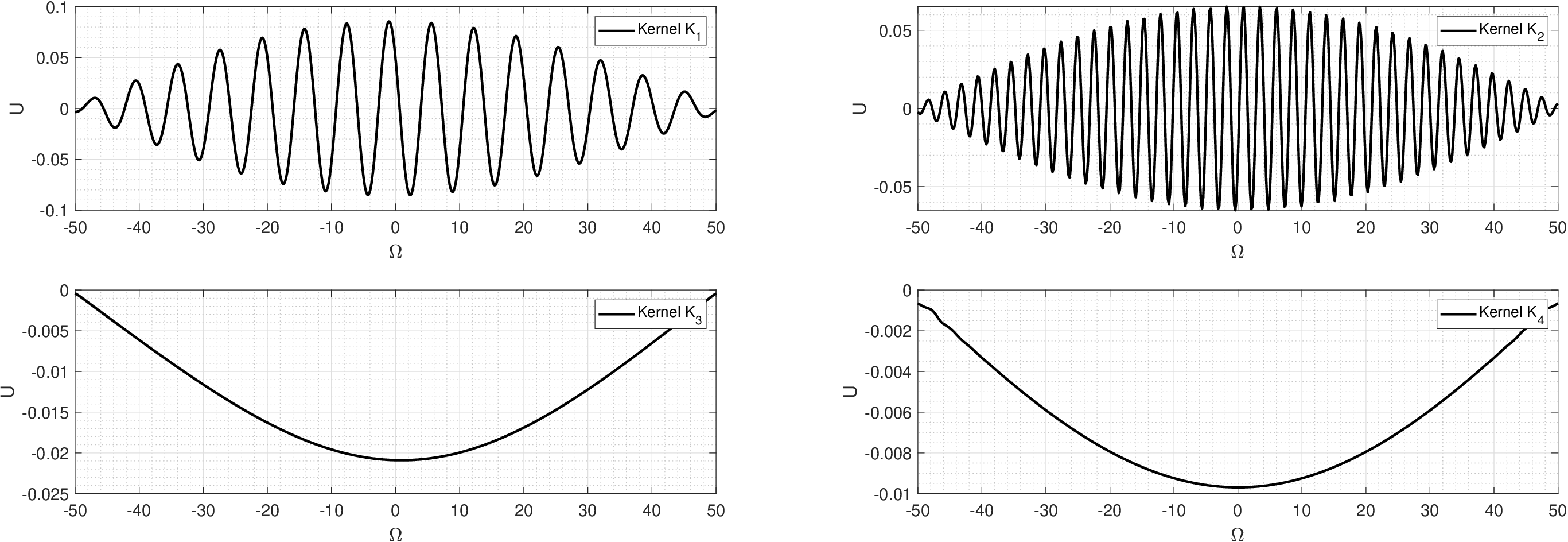}
	\end{center}
	\caption{Stationary solutions to one dimensional problem \eqref{eq:KondoModelNonlinearSimulationsDefinition} obtained via scheme~\eqref{SubSec:NumericalScheme} with random initial conditions. Here we choose $b=0.04035$ for $K_1$, $b=0.08982$ for $K_2$, $b=0.06266$ for $K_3$ and $b=0.063202$ for $K_4$. Since, these solutions do not touch the truncation level in the function $f$, these are, in fact, stationary solutions to linear problem \eqref{eq:MainResultLinearProblemDefinition} with $a=1$.}
	\label{fig:KondoModelSimulationsLinearEigenfunctions1D}
\end{figure}

Next, we obtain \ii{large} stationary solutions considered in Theorem \ref{thm:KondoModelSchauderNonNegative} and Theorem \ref{thm:KondoModelNegativeKernelsSchauder}. The existence of such solutions is strongly connected with the saturation property of the function $f$, because these solutions hit the saturation level $\pm 1$. Fig.~\ref{fig:KondoModelSimulationsSchauderTheorem1D} shows such solutions obtained for the kernels $K_1$, $K_2$, $K_3$ and $K_4$. Here, in the numerical scheme, we take the step-like initial condition $u_0$ given by \eqref{eq:StepInitialCondition1D}. For completeness of the exposition, we present also similar simulations for random initial conditions in Fig.~\ref{fig:KondoModelSimulationsSchauderTheorem1DRandom}.

\begin{remark}
	In Section \ref{chap:SchauderTheorem} we introduce particular family of sets $B$ which are invariant under operator $f(T)$. We claim that the width of the gap required to jump from the level $1$ to $-1$ is at least as big as the support of the positive part of convolution kernel. According to our simulations, the width of gap is significantly smaller then the value assumed during the construction of $B$. Since the shape of function $u$ in the gap vary for different types of kernels, precise construction of set $B$ would require much more technical details.
\end{remark}

\begin{figure}[!h]
	\begin{center}
		\includegraphics[width=1.03\textwidth]{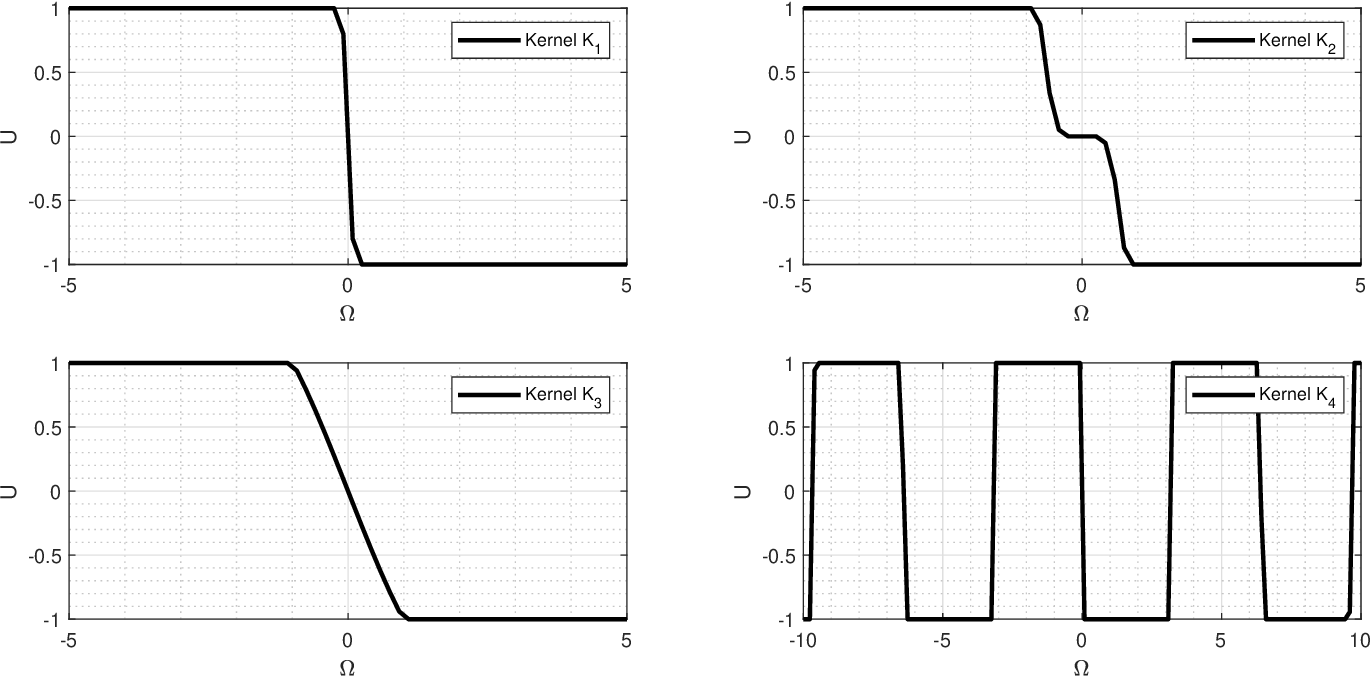}
	\end{center}
	\caption{Numerical stationary solutions to one dimensional problem \eqref{eq:KondoModelNonlinearSimulationsDefinition} obtained via scheme~\eqref{SubSec:NumericalScheme} with initial conditions given by \eqref{eq:StepInitialCondition1D}. Here, $b=0.8$ for $K_1$, $b=0.7$ for $K_2$, $b=0.1$ for $K_3$ and $b=1.0$ for $K_4$. }
	\label{fig:KondoModelSimulationsSchauderTheorem1D}
\end{figure}

\begin{figure}[!h]
	\begin{center}
		\includegraphics[width=1.03\textwidth]{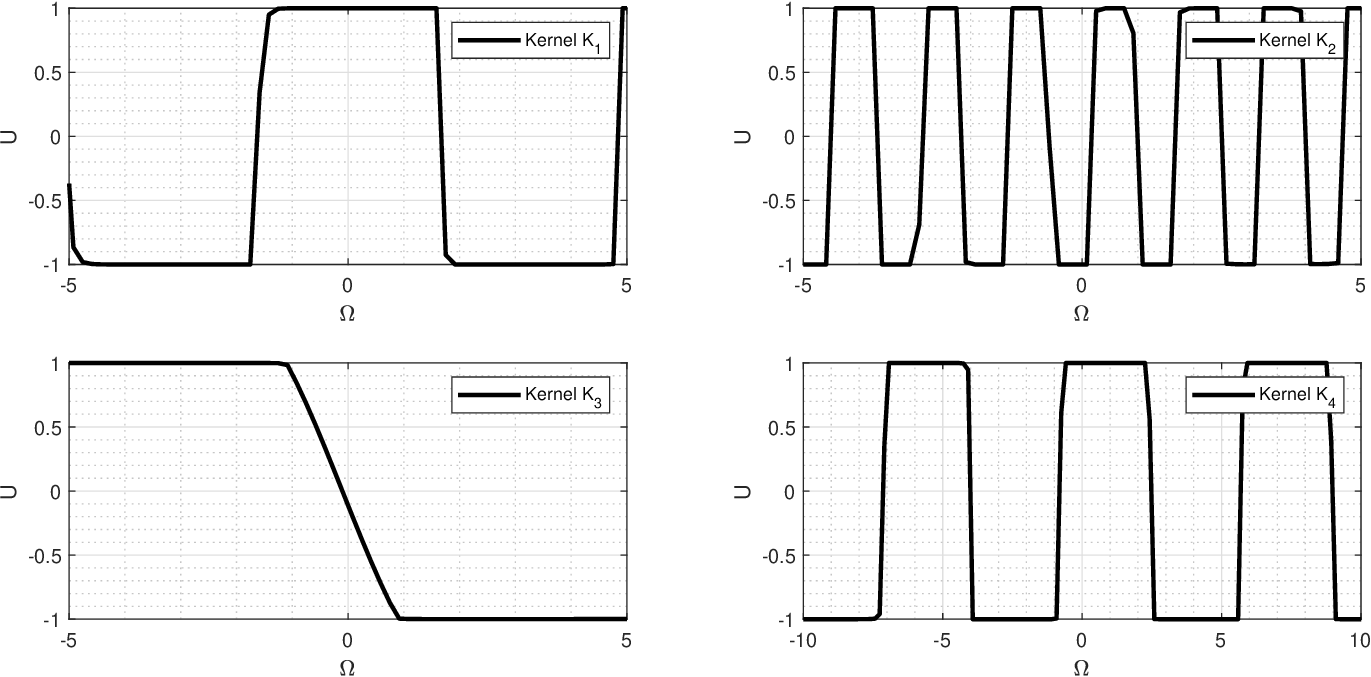}
	\end{center}
	\caption{Numerical stationary solutions to one dimensional problem \eqref{eq:KondoModelNonlinearSimulationsDefinition} obtained via scheme~\eqref{SubSec:NumericalScheme} with random initial conditions. Here, $b=0.8$ for $K_1$, $b=0.7$ for $K_2$, $b=0.1$ for $K_3$ and $b=1.0$ for $K_4$.}
	\label{fig:KondoModelSimulationsSchauderTheorem1DRandom}
\end{figure}

\newpage

\subsection{Two dimensional results}
Here, we present two dimensional stable stationary solutions obtained via scheme \eqref{SubSec:NumericalScheme} and we focus only on \ii{large} stationary solutions. We present simulations for five different kernels $K_5$, $K_6$, $K_7$, $K_8$ and $K_9$ shown in Fig. \ref{fig:KondoModelSimulations2DConvolutionKernels}. The kernels $K_5$ and $K_6$ are positive near zero, and negative otherwise, what corresponds to the local activation and the long range inhibition. Both kernel have the same positive part, but the kernel $K_6$ corresponds to the weaker inhibition effect. Kernel $K_7$ is zero near $0$, positive at some distance from zero, and negative otherwise. We have long range activation and long range inhibition. The support of the negative part is set to to ensure that the integral over the kernel is relatively small. Kernel $K_8$ is negative in the neighbourhood of 0 and positive otherwise, which corresponds to the local inhibition and long range activations. Kernel $K_9$ has only negative part.

\begin{figure}[!h]
	\begin{center}
		\includegraphics[width=0.9\textwidth]{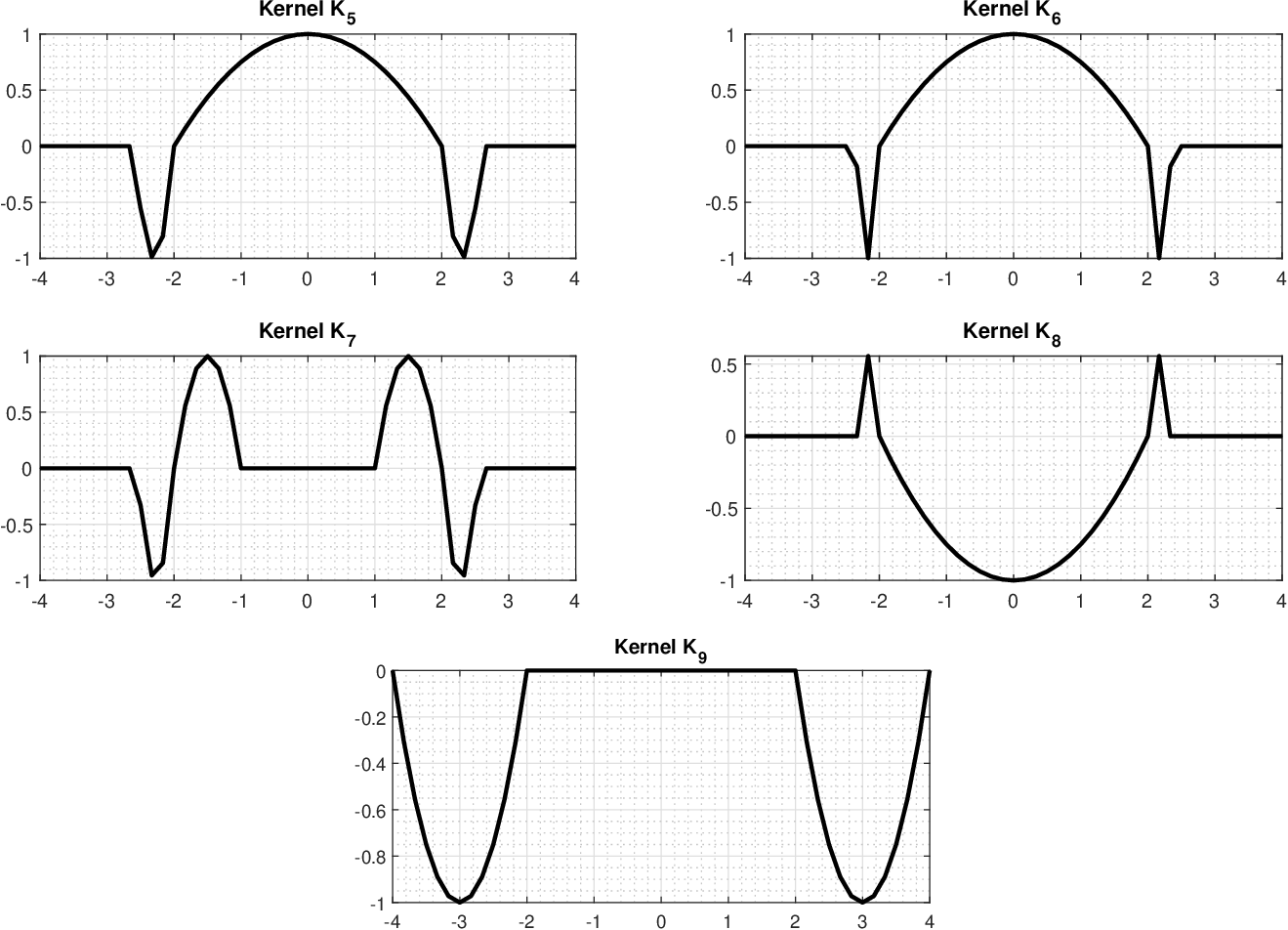}
	\end{center}
	\caption{The kernels $K_5$, $K_6$, $K_7$, $K_8$ and $K_9$ used in two dimensional numerical simulations.}
	\label{fig:KondoModelSimulations2DConvolutionKernels}
\end{figure}

For each kernel, we present four patterns which are obtained from either step-like initial condition~\eqref{eq:StepInitialCondition2D} or random initial condition and two different values of $b$, either $b$ slightly greater then $b_{ctirical}$ (denoted $b = b_{ctirical} + \varepsilon$ with sufficiently small $\varepsilon >0$) or $b$ significantly greater then $b_{critical}$ (denoted $b>>b_{critical}$). There exists a particular constant, namely $b_{critical} = \frac{1}{\lambda_k}$, where $\lambda_k$ is the maximal eigenvalue of operator~$T$, such that for each $b < b_{critical}$ every solution of problem \eqref{eq:KondoModelNonlinearSimulationsDefinition} with arbitrary initial condition $u_0$ converges to $0$. When $b = b_{critical}$ we have linear case and obtained patterns are eigenfunctions of operator $T$. To obtain \ii{large} nonconstant stationary solutions  we need to ensure that at least one eigenfunction becomes unstable, namely $b > b_{critical}$.

\begin{figure}[!t]
	\begin{center}
		\includegraphics[width=1\textwidth]{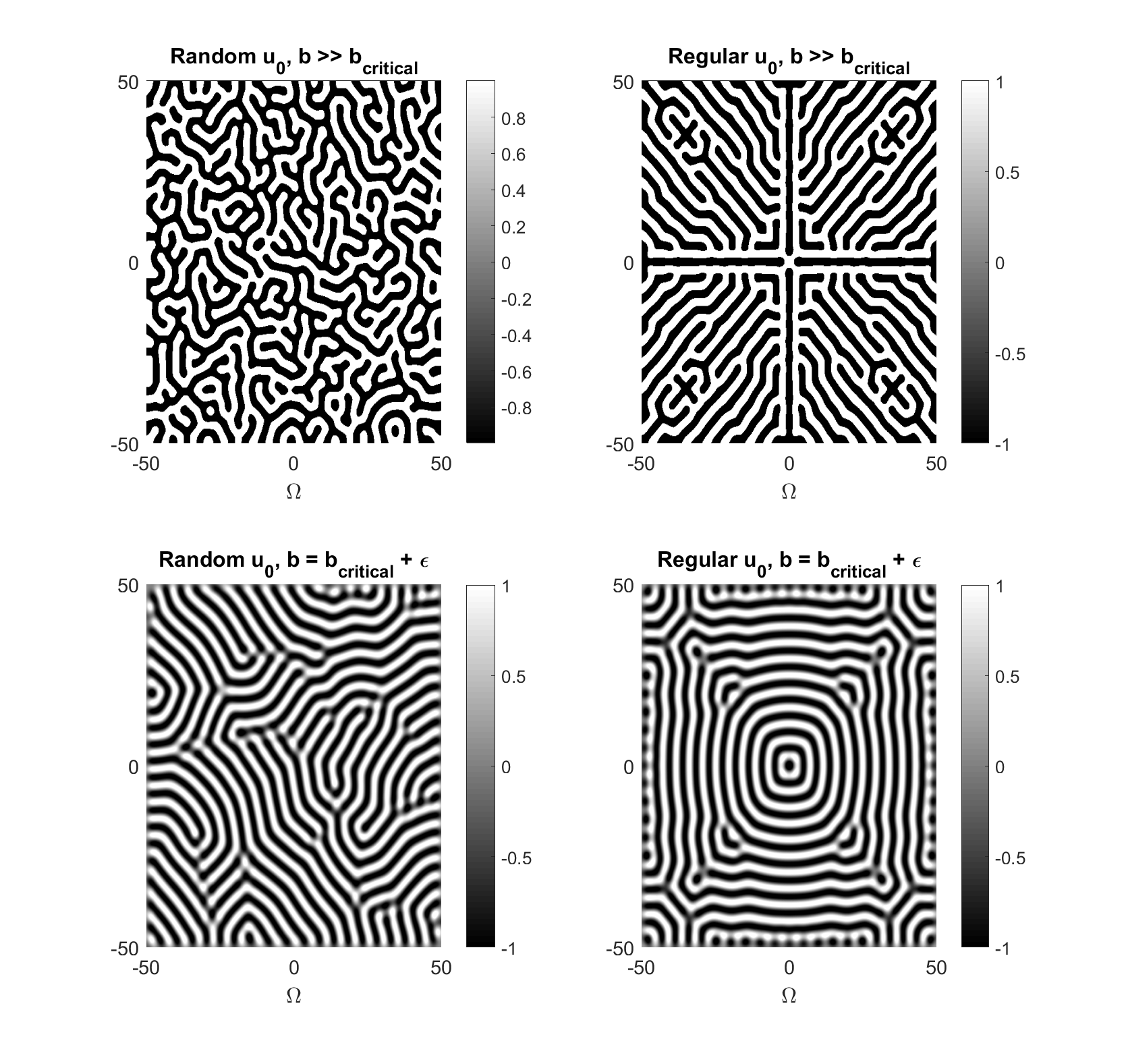}
	\end{center}
	\caption{Numerical stationary solutions to problem \eqref{eq:KondoModelNonlinearSimulationsDefinition} obtained via scheme~\eqref{SubSec:NumericalScheme} for kernel $K_5$. Here, $b = 1$ (upper row) and $b = 0.0063$ (lower row), initial condition is random (left column) or step-like function \eqref{eq:StepInitialCondition2D} (right column). We obtain symmetric patterns for symmetric initial condition \eqref{eq:StepInitialCondition2D}. The width of black and white paths is constant and it depends on the the size of the kernel support. If $b = b_{critical} + \varepsilon$ we have only a few unstable eigenfunctions, hence patterns are more regular. However, if $b>>b_{critical}$, then there are many unstable eigenfunctions and patterns are more diversified and involved.}
	\label{fig:KondoModelSimulations2DKernelK1Pattern}
\end{figure}
\clearpage

\begin{figure}[!t]
	\begin{center}
		\includegraphics[width=1\textwidth]{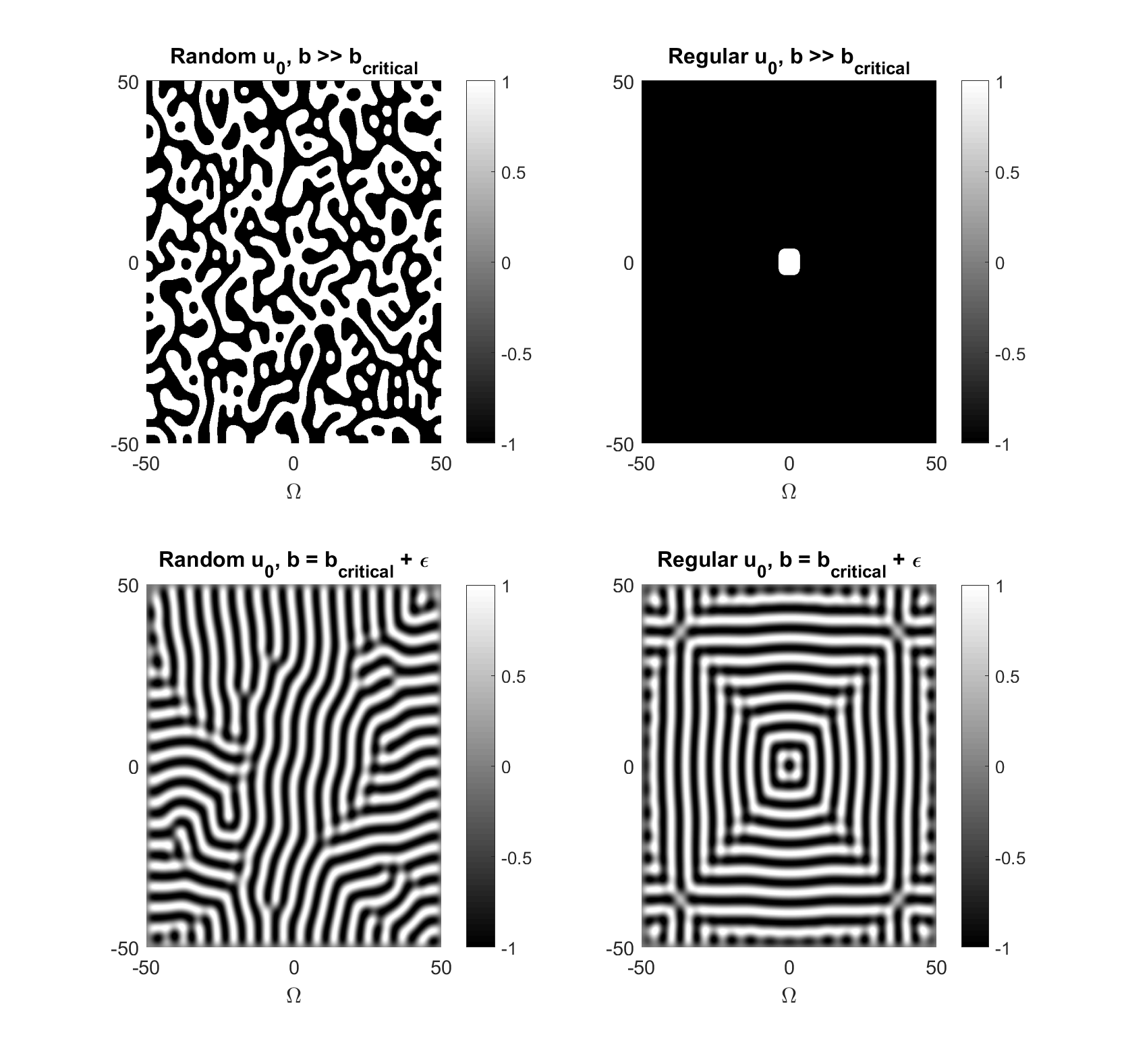}
	\end{center}
	\caption{Numerical stationary solutions to problem \eqref{eq:KondoModelNonlinearSimulationsDefinition} obtained via scheme~\eqref{SubSec:NumericalScheme} for kernel $K_6$. Here, $b = 1$ (upper row) and $b = 0.0070$ (lower row), initial condition is random (left column) or step-like function \eqref{eq:StepInitialCondition2D} (right column). We obtain symmetric patterns for symmetric initial condition \eqref{eq:StepInitialCondition2D}. If $b = b_{critical} + \varepsilon$ we have only a few unstable eigenfunctions. For $b>>b_{critical}$ we have many unstable eigenfunctions. Notice that for large $b$ the width of white and black paths is no longer constant. The negative part of kernel $K_6$ is small and hence large constant areas are invariant under operator $T$. In case of step-like initial condition  \eqref{eq:StepInitialCondition2D}, a large positive part of the kernel is insufficient to spread the positive solution across the domain but it can preserve the existing one if the initial square is sufficiently large.}
	\label{fig:KondoModelSimulations2DKernelK2Pattern}
\end{figure}
\clearpage

\begin{figure}[!t]
	\begin{center}
		\includegraphics[width=1\textwidth]{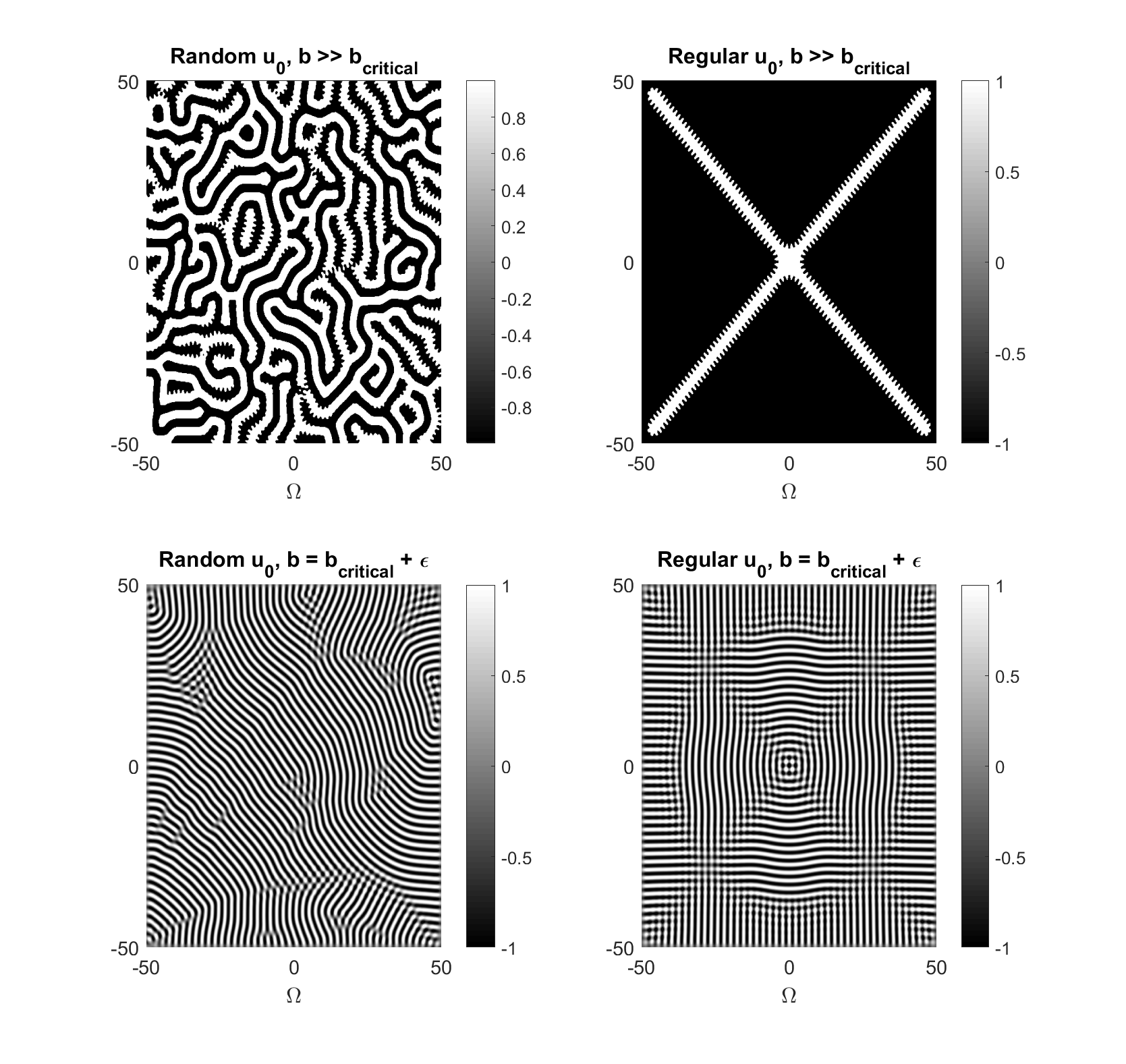}
	\end{center}
	\caption{Numerical stationary solutions to problem \eqref{eq:KondoModelNonlinearSimulationsDefinition} obtained via scheme~\eqref{SubSec:NumericalScheme} for kernel $K_7$. Here, $b = 1$ (upper row) and $b = 0.0127$ (lower row), initial condition is random (left column) or step-like function \eqref{eq:StepInitialCondition2D} (right column). We obtain symmetric patterns for symmetric initial condition \eqref{eq:StepInitialCondition2D}. If $b = b_{critical} + \varepsilon$ we have only a few unstable eigenfunctions. The maximal eigenvalue corresponds to high order eigenfunction and hence the white and black lines are thin. If $b>>b_{critical}$ there are many unstable eigenfunctions. Notice that, locally the solution is distorted with a set of small humps distributed across the white paths. The size of those humps depends on the size of the gap in kernel $K_7$. 
	}
	\label{fig:KondoModelSimulations2DKernelK3Pattern}
\end{figure}
\clearpage

\begin{figure}[!t]
	\begin{center}
		\includegraphics[width=1\textwidth]{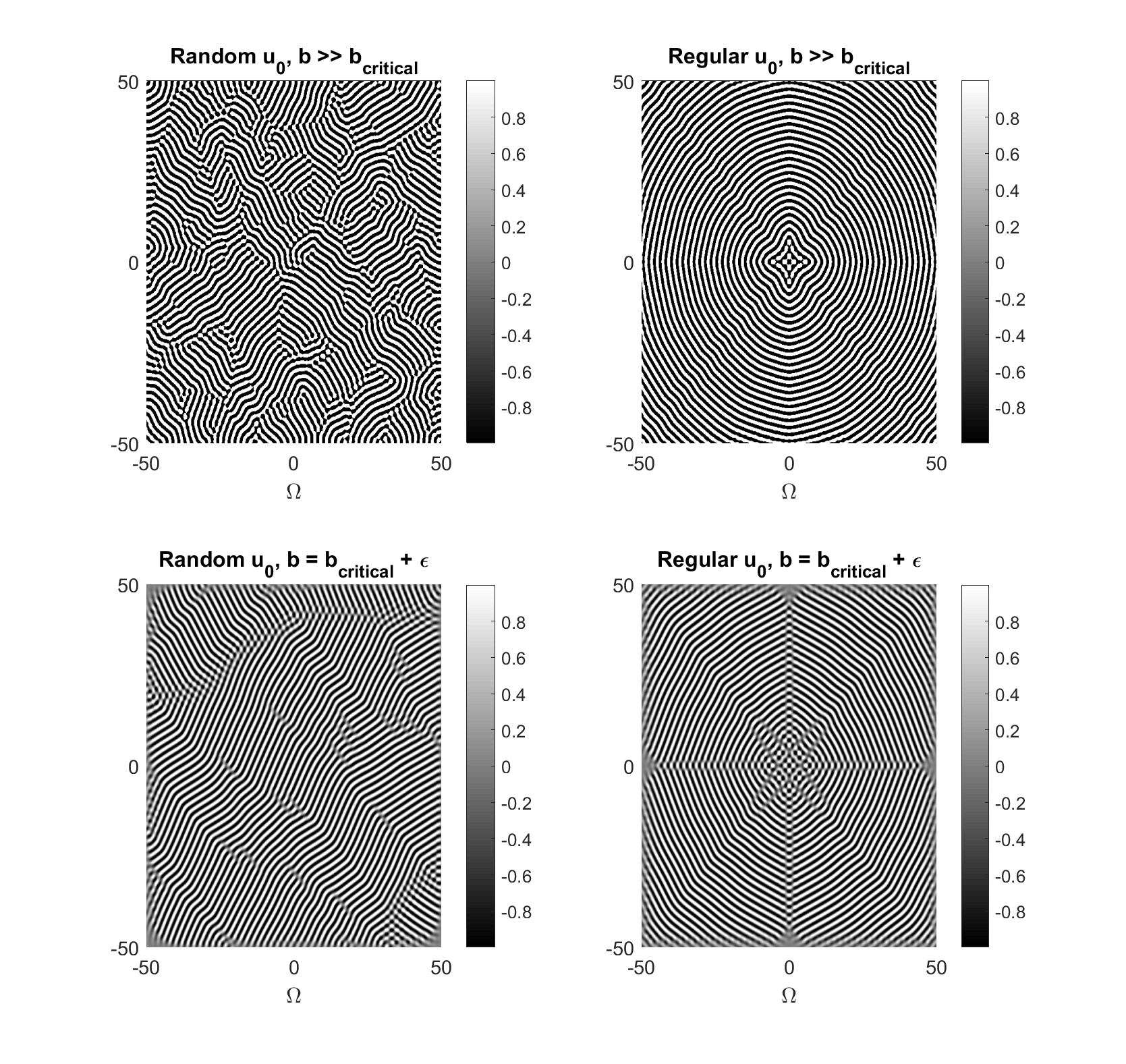}
	\end{center}
	\caption{Numerical stationary solutions to problem \eqref{eq:KondoModelNonlinearSimulationsDefinition} obtained via scheme~\eqref{SubSec:NumericalScheme} for kernel $K_8$. Here, $b = 1$ (upper row) and $b = 0.0352$ (lower row), initial condition is random (left column) or step-like function \eqref{eq:StepInitialCondition2D} (right column). We obtain symmetric patterns for symmetric initial condition \eqref{eq:StepInitialCondition2D}. The width of black and white paths is constant and it depends on the the size of the kernel support. If $b = b_{critical} + \varepsilon$ we have only a few unstable eigenfunctions. If $b >> b_{critical}$ there are many unstable eigenfunctions, however solutions are similar to those obtained for small $b$. There are no other eigenvalues in the neighbourhood of the maximal eigenvalue hence eigenfunction corresponding to maximal eigenvalue dominates the shape of patterns.}
	\label{fig:KondoModelSimulations2DKernelK4Pattern}
\end{figure}
\clearpage

\begin{figure}[!t]
	\begin{center}
		\includegraphics[width=1\textwidth]{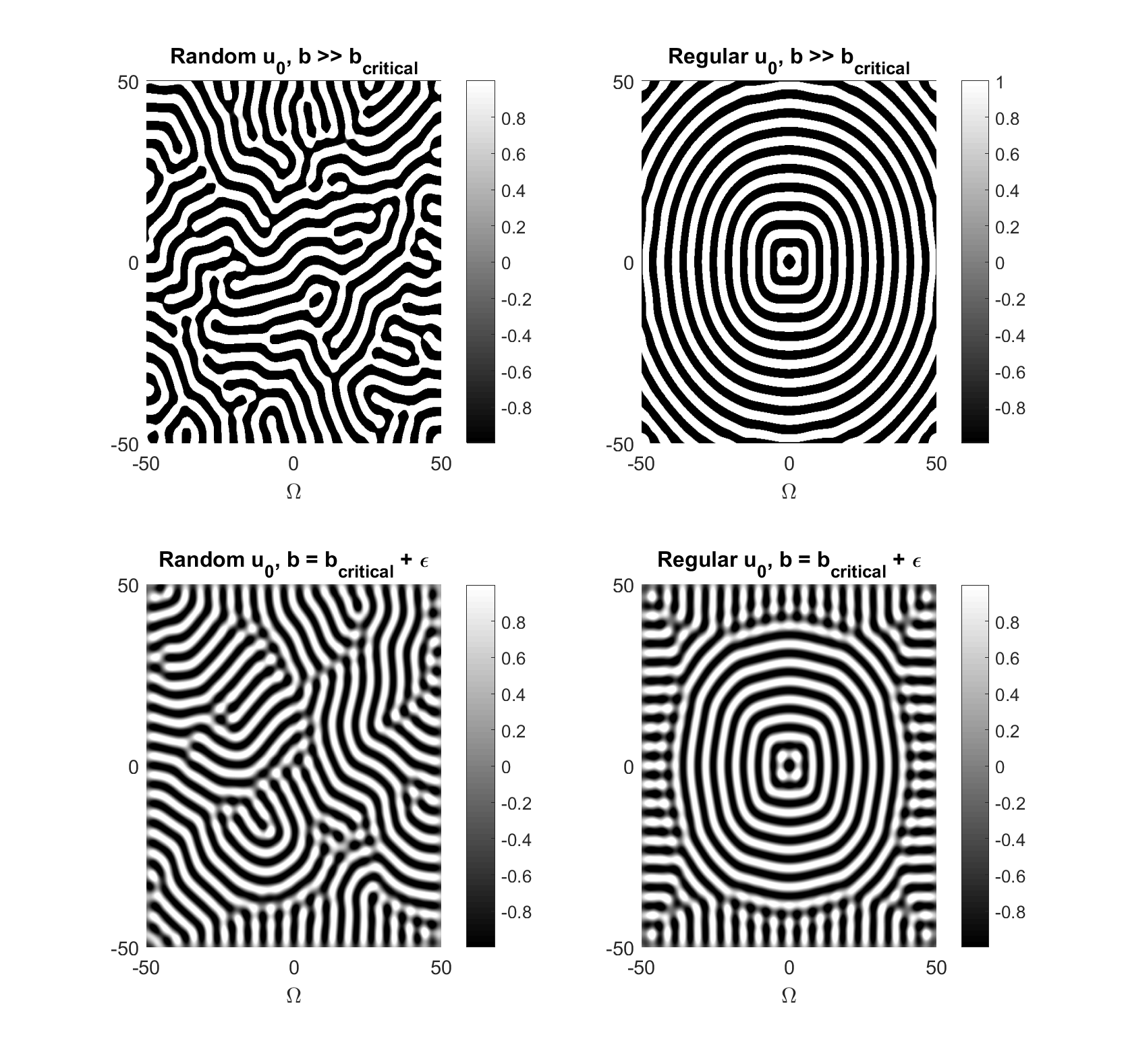}
	\end{center}
	\caption{Numerical stationary solutions to problem \eqref{eq:KondoModelNonlinearSimulationsDefinition} obtained via scheme~\eqref{SubSec:NumericalScheme} for kernel $K_9$. Here, $b = 1$ (upper row) and $b = 0.0040$ (lower row), initial condition is random (left column) or step-like function \eqref{eq:StepInitialCondition2D} (right column). We obtain symmetric patterns for symmetric initial condition \eqref{eq:StepInitialCondition2D}. The width of black and white paths is constant and it depends on the the size of the kernel support. If $b = b_{critical} + \varepsilon$ we have only a few unstable eigenfunctions, hence patterns are more regular. However, if $b>>b_{critical}$, then there are many unstable eigenfunctions and patterns are more diversified and involved.}
	\label{fig:KondoModelSimulations2DKernelK5Pattern}
\end{figure}
\clearpage


\section{Proofs of mathematical results}
\label{chap:MathematicalResults}
\subsection{Stability of solutions to the linear problem}
\label{chap:LinearizedModel}

\begin{proof}[Proof of Proposition \ref{thm:KondoModelLinearizedStability}]
	Let $\su = e_k$ be a stationary solution to the linear problem \eqref{eq:MainResultLinearProblemDefinition}. We consider this problem with perturbed initial condition $u_0 = e_k + u_\varepsilon$ and expand the solution in the orthonormal basis $\lbrace e_j \rbrace_{j=1}^\infty$ of operator $T$ from Remark \ref{rem:OrthonormalBasis}, namely $u(t,x) =  \sum_{j=1}^\infty a_j(t) {e}_j(x)$ with unknown functions $\lbrace a_j(t) \rbrace_{j=1}^\infty$. We substitute $u$ in equation \eqref{eq:MainResultLinearProblemDefinition} and obtain
	\begin{nalign}
		\label{eq:KondoModelLinearizedStabilityMatrixFormDisturbed}
		\sum_{j=1}^\infty a'_j(t) {e}_j(x) & = -a \sum_{j=1}^\infty a_j(t) {e}_j(x) + b\cdot\sum_{j=1}^\infty a_j(t)\lambda_j {e}_j(x)
	\end{nalign}
	which is satisfied if, and only if $\label{eq:LinearModelCoefficientsEquation} 	a_j'(t) = a_j(t)\left( b\cdot\lambda_j - a \right)$ for each $j\in \N$ and hence	\begin{align*}
		\|u(\cdot,t)\|_2 = \sum_{j=1}^\infty a_j(0)^2e^{2(b \lambda_j - a ) t}. 
	\end{align*}
	If $b\cdot \lambda_j - a \leqslant 0$ for each $j \in \N$ then the stationary solution to problem \eqref{eq:MainResultLinearProblemDefinition} is stable and satisfies $\|u(\cdot,t)\|_2 \leqslant \|u_0\|_2$. Since $\lambda_j \to 0$ as $j \to \infty$, then the solution is stable if $\frac{a}{b}$ is equal to the maximal eigenvalue for $b>0$ or minimal eigenvalue for $b<0$.
\end{proof}

%
%

\subsection{Weak solutions to the nonlinear problem}
\label{chap:NonlinearModel}

Under Assumption \ref{Ass:NonlinearModelSpectralProperties} we may define, formally the operator $T^{-1}$  
\begin{align*}
T^{-1} e_j = \frac{1}{\lambda_j} e_j \text{ for each } j\in \N 
\end{align*}
and rewrite the equation \eqref{def:StationarySolutionBifurcation} in the form
\begin{nalign}
	0 = -a T^{-1}(\overline{v}) + f(\overline{v}).
\end{nalign}
We are going to find a solution to the equivalent equation
\begin{nalign}
	aT^{-1}(\overline{v}) + d\overline{v} = d\overline{v} + f(\overline{v}),
\end{nalign}	
where a fixed parameter $d$ satisfies $d+a\lambda_j > 0 $ for each $j\in \N$. First, we introduce the bilinear form  and weak solution to problem \eqref{def:StationarySolutionBifurcation}
\begin{definition}[Bilinear form $\big(Q,D(Q)\big)$]
	Let Assumption \ref{Ass:NonlinearModelSpectralProperties} hold true. For each $u,\, v \in L^2(\Omega)$ such that $u = \sum_{j=1}^\infty a_j e_j$ and $v= \sum_{j=1}^\infty b_j e_j$ let
	\label{def:BilinearFormDefinition}
	\begin{align*}
	D(Q) &=  \Big\lbrace u = \sum_{j=1}^\infty a_j e_j:\sum_{j=1}^\infty a_j^2 \left(\frac{a}{\lambda_j} + d\right) < \infty \Big\rbrace
	\end{align*}
	and
	\begin{align*}	
	Q(u,v) &= \sum_{j=1}^\infty a_jb_j\left(\frac{a}{\lambda_j} + d\right),\quad \text{for } u,  \overline{v}\in D(Q).
	\end{align*}		
\end{definition} 

\begin{definition}[Weak solution]
	Let Assumption \ref{Ass:NonlinearModelSpectralProperties} hold true.
	The function $\overline{v} \in D(Q)$ is a weak solution of equation \eqref{def:StationarySolutionBifurcation} if 
	\begin{nalign}
		\label{eq:NonlinearWeakSolution}
		\langle \overline{v},\varphi \rangle_Q = d\int_\Omega \overline{v}(x) \varphi(x)\dx + \int_\Omega f\big(\overline{v}(x)\big)  \varphi(x) \dx && \text{for each } \varphi \in D(Q). 
	\end{nalign}
\end{definition}
\noindent

\begin{remark}
	Note that,
	\begin{align*}
	\langle e_j,v\rangle_Q = b_j \Big(\frac{a}{\lambda_j} + d \Big) = \Big(\frac{a}{\lambda_j} + d \Big) \langle e_j, v \rangle = \Big\langle (aT^{-1} +d)e_j, v \Big\rangle.
	\end{align*}
\end{remark}
\begin{remark}
	Since $\frac{a}{\lambda_j} + d >0$ for each $j\in N$ the bilinear form $\langle \cdot, \cdot \rangle_Q$ is a scalar product on $D(Q)$. We denote by $\| \cdot \|_Q$ the corresponding norm.
\end{remark}

\begin{lemma}
	\label{thm:NonlinearModelDomainProperties}
	The image of operator $T$ given by \eqref{eq:OperatorTDefinition} is a subset of $L^\infty(\Omega)$. Moreover ${\rm Im}(T)$ is dense in $D(Q)$ and $D(Q)$ is dense in $L^2(\Omega)$.
\end{lemma}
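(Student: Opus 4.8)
The plan is to prove the three claims in the order stated, moving from the strongest to the weakest topology. First I would show $\mathrm{Im}(T)\subset L^\infty(\Omega)$. This is essentially Lemma \ref{thm:TruncatedConvolutionProperties} together with Assumption \ref{Ass:NonlinearModelDefinition}: for $u\in L^2(\Omega)$ we have $Tu=K*_\Omega u$ and $\|Tu\|_{L^\infty}\leqslant\|K\|_{L^2}\|u\|_{L^2}<\infty$, so $Tu\in L^\infty(\Omega)$; since $\Omega$ is bounded this also gives $Tu\in L^2(\Omega)$, consistent with $T$ being the given integral operator. In particular every eigenfunction $e_j=\lambda_j^{-1}Te_j$ lies in $L^\infty(\Omega)$, which is what will make the later density arguments run.

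Next I would prove $\mathrm{Im}(T)$ is dense in $D(Q)$ with respect to $\|\cdot\|_Q$. Given $v=\sum_j a_j e_j\in D(Q)$, i.e. $\sum_j a_j^2(a/\lambda_j+d)<\infty$, consider the truncations $v_N=\sum_{j=1}^N a_j e_j$. Each $v_N$ is a finite linear combination of eigenfunctions, hence $v_N=T(\sum_{j=1}^N a_j\lambda_j^{-1}e_j)\in\mathrm{Im}(T)$, and $\|v-v_N\|_Q^2=\sum_{j>N}a_j^2(a/\lambda_j+d)\to 0$ as $N\to\infty$ since it is the tail of a convergent series. Thus the finite combinations of eigenfunctions, all of which lie in $\mathrm{Im}(T)$, are $\|\cdot\|_Q$-dense in $D(Q)$.

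Finally I would prove $D(Q)$ is dense in $L^2(\Omega)$. The point is that $\{e_j\}$ is an orthonormal basis of $\overline{\mathrm{Im}(T)}$, but not a priori of all of $L^2(\Omega)$: the closure of $\mathrm{Im}(T)$ could miss the kernel of $T$ (the orthogonal complement of $\overline{\mathrm{Im}(T)}$, since $T$ is self-adjoint by the evenness of $K$). So I must argue $\ker T=\{0\}$, or at least that $\ker T$ is contained in the $L^2$-closure of $D(Q)$. Here is where I would use that $K$ is a compactly supported kernel of a convolution: if $Tu=K*\widetilde u|_\Omega=0$ this does not immediately force $\widetilde u=0$ on $\Omega$, so the honest route is to observe that $D(Q)\supset\mathrm{span}\{e_j\}$ already contains all finite eigenfunction combinations, whose $L^2$-closure is $\overline{\mathrm{Im}(T)}$; then one needs $\overline{\mathrm{Im}(T)}=L^2(\Omega)$, equivalently $\ker T^*=\ker T=\{0\}$. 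I expect this injectivity of $T$ to be the main obstacle: it should follow from a Fourier/analyticity argument (the Fourier transform of the compactly supported $K$ is real-analytic, so $\widehat K$ vanishes only on a measure-zero set, forcing $\widehat{\widetilde u}=0$ hence $u=0$) but it genuinely uses the convolution structure and the specific kernel, and may require an extra nondegeneracy hypothesis on $K$ of the type "$\widehat K\neq 0$ a.e." which I would add if it is not already implicit. Granting injectivity, $\overline{\mathrm{Im}(T)}=L^2(\Omega)$, and since $D(Q)\supset\mathrm{Im}(T)$ is dense in $\mathrm{Im}(T)$ in the $L^2$ norm (the $\|\cdot\|_Q$-convergence of $v_N\to v$ above implies $L^2$-convergence because $a/\lambda_j+d$ is bounded below by a positive constant on the finitely-many-negative-$\lambda_j$ set and tends to $d>0$ otherwise), we conclude $D(Q)$ is $L^2$-dense, completing the proof.
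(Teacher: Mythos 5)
Your handling of the first two claims is essentially the paper's own argument: the $L^\infty$ bound comes from the integrability of the bounded, compactly supported $K$ against $\widetilde u$ (the paper uses $\|K\|_\infty$ and Cauchy--Schwarz on the bounded domain, you invoke Lemma \ref{thm:TruncatedConvolutionProperties}; both work), and the density of ${\rm Im}(T)$ in $D(Q)$ is obtained by truncating the eigenfunction expansion exactly as you do. Where you genuinely diverge is on the third claim. The paper simply writes $\|u-u_N\|_2=\sum_{j>N}a_j^2\to 0$, which silently assumes every $u\in L^2(\Omega)$ admits an expansion $u=\sum_j a_j e_j$, i.e.\ that $\lbrace e_j\rbrace$ is complete in $L^2(\Omega)$ and not merely in $\overline{{\rm Im}(T)}$. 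You are right that this reduces to $\ker T=\lbrace 0\rbrace$ (equivalently $\overline{{\rm Im}(T)}=L^2(\Omega)$, since $T$ is self-adjoint by the evenness of $K$), and that it does not follow from the stated hypotheses alone; this is a genuine gap in the paper's proof, presumably meant to be absorbed into Assumption \ref{Ass:NonlinearModelSpectralProperties} (``$\lambda_j\neq 0$'') read as the statement that $0$ is not an eigenvalue. One caveat on your proposed repair: the Fourier/analyticity argument does not apply directly, because $Tu=K*\widetilde u\,\big|_\Omega$ is a \emph{truncated} convolution, so $Tu=0$ only says $K*\widetilde u$ vanishes on $\Omega$ and not on all of $\R^n$; you cannot pass to $\widehat K\,\widehat{\widetilde u}=0$. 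Injectivity therefore genuinely requires either an additional hypothesis or an explicit appeal to the standing assumption, as you suspected --- your instinct to flag this as the main obstacle is correct, and it is the one point the paper's proof does not address at all.
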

\begin{proof}
	Let $u\in L^2(\Omega)$. We have
	\begin{align*}
	\|T u\|_\infty &=  \Big\|\int_\Omega {K}(x-y) {u}(y) \dy\Big\|_\infty \leqslant \big\| K(x-\cdot) \big\|_2 \big\|u\big\|_2. 
	\end{align*}
	To show the density, observe that finite sums $u_N = \sum_{j=1}^N a_j e_j \in {\rm Im}(T)$, thus
	\begin{align*}
	\|u - u_N\|_Q = \sum_{j=N+1}^\infty a_j^2 \left(\frac{a}{\lambda_j} + d \right) \xrightarrow{N\to \infty} 0 && \text{and} && \|u - u_N\|_2 = \sum_{j=N+1}^\infty a_j^2 \xrightarrow{N\to \infty} 0.
	\end{align*} 
\end{proof}
\begin{remark}
	\label{rmk:NormInequality}
	Notice that $\|\cdot \|_{L^2} \leqslant C \|\cdot \|_Q$ for all $u\in D(Q)$ which is a consequence of the relation $\lambda _j \to 0$.
\end{remark}

We study problem \eqref{eq:NonlinearWeakSolution} \cl{with} variational methods. For a function $F'(v) = f(v)$ we define a functional $J :D(Q) \to \R$, by the formula
\begin{nalign}
	\label{eq:KondoModelNonlinearFunctional}
	J(v) &= \frac{d}{2}\int_\Omega v(x)^2\dx + \int_\Omega F\big(v(x)\big)\dx, & \text{for each } v\in D(Q).
\end{nalign} 
First, we prove basic properties of the functional $J$.

\begin{lemma}
	For every $f\in C^2(\R)$ the functional 
	\begin{nalign}
		J(v) &= \frac{d}{2}\int_\Omega v(x)^2\dx + \int_\Omega F\big(v(x)\big)\dx
	\end{nalign}
	satisfies $J \in C^2(D(Q),\R)$.
	\label{thm:C2functional}
\end{lemma}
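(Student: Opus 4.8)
The plan is to split $J=J_1+J_2$ with $J_1(v)=\tfrac d2\int_\Omega v^2$ and $J_2(v)=\int_\Omega F(v)$, where $F$ is the primitive of $f$ with $F(0)=0$, so that $F\in C^3(\R)$ with $F'=f$, $F''=f'$, $F'''=f''$ all bounded. By Remark \ref{rmk:NormInequality} we have $\|\cdot\|_{L^2}\le C\|\cdot\|_Q$ on $D(Q)$, so $J_1$ is a continuous quadratic form on the Hilbert space $(D(Q),\|\cdot\|_Q)$, with $DJ_1(v)\varphi=d\int_\Omega v\varphi$ and constant second derivative $D^2J_1(v)[\varphi,\psi]=d\int_\Omega\varphi\psi$; hence $J_1\in C^\infty(D(Q),\R)$ and everything reduces to $J_2\in C^2(D(Q),\R)$. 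For $J_2$ the candidates are $DJ_2(v)\varphi=\int_\Omega f(v)\varphi$ and $D^2J_2(v)[\varphi,\psi]=\int_\Omega f'(v)\varphi\psi$. First I would check these are well defined and bounded on $D(Q)$: since $\Omega$ and $f,f'$ are bounded, $F(v)\in L^1(\Omega)$ and $f(v),f'(v)\in L^\infty(\Omega)\subset L^2(\Omega)$, and Cauchy--Schwarz together with $\|\cdot\|_{L^2}\le C\|\cdot\|_Q$ does the rest. Then I would verify these are the Gâteaux derivatives: the difference quotients $s^{-1}(F(v+sw)-F(v))$ and $s^{-1}(f(v+sw)-f(v))$ converge pointwise a.e.\ to $f(v)w$, resp.\ $f'(v)w$, as $s\to0$, and are dominated by $\|f\|_\infty|w|\in L^1(\Omega)$, resp.\ $\|f'\|_\infty|w|\in L^2(\Omega)$; dominated convergence (in $L^1$, resp.\ $L^2$) and pairing with a test function give the Gâteaux differentiability of $J_2$ and of $v\mapsto DJ_2(v)$, with the stated derivatives.

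Since a map possessing a Gâteaux derivative that depends continuously on the point is Fréchet differentiable and of class $C^1$, it then suffices to prove continuity of $v\mapsto DJ_2(v)\in D(Q)^*$ and of $v\mapsto D^2J_2(v)\in\mathcal L^2(D(Q)\times D(Q);\R)$; applying this fact once to $J_2$ and once to $DJ_2$ yields $J_2\in C^2$. Continuity of $DJ_2$ reduces to continuity of the Nemytskii map $v\mapsto f(v)$ from $L^2(\Omega)$ to $L^2(\Omega)$: if $v_n\to v$ in $L^2$ then, along subsequences, $v_n\to v$ a.e., $|f(v_n)-f(v)|\le 2\|f\|_\infty$, and dominated convergence gives $f(v_n)\to f(v)$ in $L^2$; the standard subsequence argument removes the restriction, and composing with the continuous inclusion $D(Q)\hookrightarrow L^2(\Omega)$ finishes this step.

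The hard part is the continuity of $v\mapsto D^2J_2(v)$, i.e.\ showing $\sup_{\|\varphi\|_Q\le1,\,\|\psi\|_Q\le1}\big|\int_\Omega(f'(v_n)-f'(v))\varphi\psi\big|\to0$ when $v_n\to v$ in $D(Q)$: the naive Cauchy--Schwarz bound only controls this by $C\|f'(v_n)-f'(v)\|_{L^\infty(\Omega)}$, which need not vanish (multiplication by $g$ has operator norm $\|g\|_{L^\infty}$ on $L^2$). My plan is to exploit that the unit ball $B=\{\varphi:\|\varphi\|_Q\le1\}$ is relatively compact in $L^2(\Omega)$: writing $\varphi=\sum_ja_je_j$, one has $\|\varphi\|_Q^2=\sum_ja_j^2\big(\tfrac a{\lambda_j}+d\big)$ with positive weights $\tfrac a{\lambda_j}+d\to+\infty$ (only finitely many $\lambda_j$ are negative and $\lambda_j\to0$), so the $L^2$-tails of elements of $B$ are uniformly small and an elementary Rellich-type argument gives relative compactness. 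Then, for $\delta>0$, pick a finite $\delta$-net $\{\varphi^1,\dots,\varphi^N\}$ of $B$ in $L^2(\Omega)$; splitting each $\varphi,\psi\in B$ into a net element plus an $L^2$-error of norm $\le\delta$ and using $\|f'(v_n)-f'(v)\|_{L^\infty}\le 2\|f'\|_\infty$ reduces the supremum, up to an $O(\delta)$ error, to the finitely many quantities $\big|\int_\Omega(f'(v_n)-f'(v))\varphi^i\psi^k\big|$; each of these tends to $0$ by Vitali's theorem, since $v_n\to v$ in measure and $f'$ is continuous imply $f'(v_n)-f'(v)\to0$ in measure while $|(f'(v_n)-f'(v))\varphi^i\psi^k|\le 2\|f'\|_\infty|\varphi^i\psi^k|\in L^1(\Omega)$. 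Letting $\delta\to0$ gives the continuity of $v\mapsto D^2J_2(v)$, hence $J_2\in C^2(D(Q),\R)$, so $J=J_1+J_2\in C^2(D(Q),\R)$.
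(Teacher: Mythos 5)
Your proof is correct, and at the decisive step it takes a genuinely different route from the paper. For the first derivative and its continuity you spell out what the paper dismisses as ``an elementary calculation'' (G\^ateaux derivatives plus continuity of the Nemytskii map $v\mapsto f(v)$ on $L^2$, then the standard ``continuous G\^ateaux derivative implies $C^1$'' lemma); the content is the same. The real divergence is in the continuity of $v\mapsto D^2J(v)$. The paper runs a $3\varepsilon$ argument: it fixes $\varphi,\psi\in D(Q)$, approximates $\varphi$ by bounded functions $\varphi_m\in{\rm Im}(T)$, and exploits $\|f''\|_\infty$ to control $\int|f'(v)-f'(v_n)||\varphi_m||\psi|\leqslant C_m\|v_n-v\|_Q\|\psi\|_Q$. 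Since $C_m$ depends on $\|\varphi_m\|_\infty$, hence on $\varphi$, that argument as written gives convergence of the bilinear forms only pointwise in $(\varphi,\psi)$, not uniformly over the unit ball. You instead observe that the weights $\frac{a}{\lambda_j}+d$ tend to $+\infty$, so the $Q$-unit ball has uniformly small $L^2$-tails and is relatively compact in $L^2(\Omega)$; a finite $\delta$-net then reduces the supremum over the ball to finitely many integrals, each killed by convergence in measure of $f'(v_n)$ (via $\|f''\|_\infty$) and dominated convergence. This buys you the operator-norm continuity of $D^2J$ in the precise sense required by $J\in C^2(D(Q),\R)$, at the cost of invoking the compact embedding; the paper's approximation-by-bounded-functions argument is shorter but only delivers the weaker, pointwise statement unless one adds exactly the kind of uniformity argument you supply.
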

\noindent
We skip a direct proof of this Lemma.

\noindent

\subsection{Existence of stationary solutions using a bifurcation theorem}
\label{chap:BifurcationTheorem}

Let $X$ be a real Hilbert space, $\Omega \subseteq X$ be a neighbourhood of 0. Let $L:\Omega \to X$ be a linear continuous operator and let $H \in C(\Omega, X)$. Set $H(v) = o(\|v\|)$ as $v\to 0$. Consider the abstract equation 
\begin{nalign}
	Lv + H(v) = \lambda v.
	\label{eq:RabinowitzBiffurcationTheoremEquation}
\end{nalign}
Obviously, there exists a trivial solution $(\lambda, 0)\in \R \times X$ for each $\lambda$.
\begin{definition}
	A point $(\mu,0) \in \R \times X$ is called a bifurcation point for equation \eqref{eq:RabinowitzBiffurcationTheoremEquation} if every neighbourhood of $(\mu,0)$ contains a nontrivial solution of \eqref{eq:RabinowitzBiffurcationTheoremEquation}.
\end{definition}

\begin{remark}
	Notice that, if $(\mu,0)$ is a bifurcation point then $\mu$ belongs to the spectrum of operator~$L$.
\end{remark}

\begin{proof}
	Let $(\mu,0)$ be a bifurcation point for equation \eqref{eq:RabinowitzBiffurcationTheoremEquation}. Consider the family of balls \\ $B_n\left((\mu,0),\frac{1}{n}\right) \subset \R \times X$. For each $n$ there exists $(\mu_n, {v}_n) \in B_n$ satisfying
	\begin{nalign}
		L{v}_n + H({v}_n) = \mu_n {v}_n.
	\end{nalign}
	Obviously, we have $\mu_n \to \mu$. Divide both sides of this equation by the norm of $\overline{v}_n$ and consider the weak solution
	\begin{equation*}
	\int_\Omega L\frac{L{v}_n}{\|{v}_n\|}\varphi + \int_\Omega \frac{H({v}_n)}{\|{v}_n\|}\varphi = \int_\Omega\mu_n \frac{{v}_n}{\|{v}_n\|}\varphi.
	\end{equation*}
	By assumption for $H$, if $n\to \infty$ we obtain $\int \frac{H({v}_n)}{\|{v}_n\|}\varphi\to 0$. Put ${w}_n = \frac{{v}_n}{\|{v}_n\|}$. Sequence $\lbrace {w}_n\rbrace_{n=1}^\infty$ is bounded, hence it is weakly compact. Hence, ${w}_{n_k} \rightharpoonup {w}$ and
	\begin{equation*}
	\int_\Omega L({w}) \varphi = \int_\Omega \mu {w} \varphi.
	\end{equation*}
	The equality holds for each $v \in X$, hence $\mu$ belongs to the spectrum of the operator $L$.
\end{proof}

\noindent
Now, we recall a classical result from the bifurcation theory.

\begin{theorem}
	[Rabinowitz Bifurcation Theorem, \cite{MR0463990}]
	Let $X$ be a real Hilbert space, $U$ a neighbourhood of 0 in $X$ and $I \in C^2(U, \R)$ with $I' (v) = Lv + H(v)$, $L$ be linear and $H(v) = o(\|v\|)$ at $v = 0$. If $\mu$ is an isolated eigenvalue of $L$ of finite multiplicity, then $(\mu,0)$ is a bifurcation point for \eqref{eq:RabinowitzBiffurcationTheoremEquation}. Moreover, at least one of the following occurs:
	\label{thm:RabinowitzBiffurcationTheorem}
	\begin{enumerate}
		\item $(\mu, 0)$ is and isolated solution of \eqref{eq:RabinowitzBiffurcationTheoremEquation} in $\lbrace \mu \rbrace \times X$ 
		\item  There is one-sided neighbourhood, $\Lambda$ of $\mu$ such that for all $\lambda \in \Lambda \setminus \lbrace \mu \rbrace$, equation \eqref{eq:RabinowitzBiffurcationTheoremEquation} possesses at least two distinct nontrivial solutions.
		\item There is a neighbourhood $I$ of $\mu$ such that for all $\lambda \in I\setminus \lbrace \mu \rbrace$, equation \eqref{eq:RabinowitzBiffurcationTheoremEquation} possesses at least one nontrivial solution.
	\end{enumerate}
\end{theorem}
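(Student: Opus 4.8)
This is the classical bifurcation theorem of Rabinowitz for potential operators, and for the applications in the next sections one may simply invoke it; I nonetheless outline how the proof goes. The plan is to reformulate \eqref{eq:RabinowitzBiffurcationTheoremEquation} variationally and argue by contradiction via a Lyapunov--Schmidt reduction and a Morse-theoretic comparison on the reduced finite-dimensional problem. Since $I\in C^2(U,\R)$, the operator $L=I''(0)$ is bounded and self-adjoint, $H$ is $C^1$ with $H(0)=0$ and $H'(0)=0$, and a pair $(\lambda,v)$ solves \eqref{eq:RabinowitzBiffurcationTheoremEquation} if and only if $v$ is a critical point of the $C^2$ functional $I_\lambda(v):=I(v)-\tfrac{\lambda}{2}\|v\|^2$. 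Suppose $(\mu,0)$ were not a bifurcation point: there would be $\delta,\rho>0$ such that $v=0$ is the only solution of \eqref{eq:RabinowitzBiffurcationTheoremEquation} with $|\lambda-\mu|<\delta$ and $\|v\|<\rho$.

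First I would perform the Lyapunov--Schmidt reduction. Because $\mu$ is an isolated eigenvalue of finite multiplicity, $N:=\ker(L-\mu I)$ is finite-dimensional and $X=N\oplus W$ with $W=N^{\perp}$ being $L$-invariant and $L-\mu I$ invertible on $W$. Projecting \eqref{eq:RabinowitzBiffurcationTheoremEquation} onto $W$ and applying the implicit function theorem yields a $C^1$ map $w=w(\lambda,z)\in W$, defined near $(\mu,0)\in\R\times N$, with $w(\lambda,0)=0$ and $D_z w(\lambda,0)=0$, such that $z+w(\lambda,z)$ satisfies the $W$-component. The reduced functional $g(\lambda,z):=I_\lambda\big(z+w(\lambda,z)\big)$ is $C^2$ near $(\mu,0)$, and critical points of $g(\lambda,\cdot)$ in $z$ correspond bijectively to solutions of \eqref{eq:RabinowitzBiffurcationTheoremEquation}; by our assumption $z=0$ is the only critical point of $g(\lambda,\cdot)$ in a fixed ball $B\subset N$ for every $\lambda$ with $|\lambda-\mu|<\delta$.

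Next I would exploit the Morse-index jump. Using $D_z w(\lambda,0)=0$ one finds $D^2_z g(\lambda,0)=(\mu-\lambda)\,\mathrm{Id}_N$, so for $\lambda$ slightly below $\mu$ the origin is a nondegenerate local minimum of $g(\lambda,\cdot)$ (Morse index $0$), while for $\lambda$ slightly above $\mu$ it is a nondegenerate local maximum (Morse index $m:=\dim N\geqslant1$). Hence the critical groups satisfy $C_0(g(\mu-\ep,\cdot),0)\cong\R$ with $C_q=0$ for $q\neq0$, but $C_m(g(\mu+\ep,\cdot),0)\cong\R$ with $C_q=0$ for $q\neq m$. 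On the other hand, $z=0$ stays the \emph{only} critical point of $g(\lambda,\cdot)$ in $B$ for every $\lambda\in(\mu-\delta,\mu+\delta)$, \emph{including} the degenerate value $\lambda=\mu$, and $g$ depends continuously on $\lambda$ in the $C^2$ topology; homotopy invariance of the critical groups (valid in finite dimensions for such a family) then forces them to be independent of $\lambda$, contradicting $m\geqslant1$. Therefore $(\mu,0)$ is a bifurcation point. A Brouwer-degree computation already gives this contradiction when $m$ is odd; the critical-group refinement is precisely what makes the gradient structure decisive when $m$ is even.

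It remains to establish the trichotomy, obtained by analysing the same reduced functionals more carefully. If alternative (1) fails --- $(\mu,0)$ is not isolated among solutions in $\{\mu\}\times X$ --- then nontrivial critical points of $g(\lambda,\cdot)$ cluster at $(\mu,0)$: if they occur for $\lambda$ on both sides of $\mu$ one lands in case (3), whereas if they occur only in a one-sided neighbourhood $\Lambda$ of $\mu$ one uses that the origin switches from a local minimum to a local maximum of $g(\lambda,\cdot)$ to run a linking/minimax count on $B$, producing at least two distinct nontrivial critical points for each $\lambda\in\Lambda\setminus\{\mu\}$, which is case (2). I expect the main obstacles to be justifying the homotopy invariance of the critical groups across the degenerate parameter $\lambda=\mu$ and obtaining the sharp ``at least two'' count in alternative (2); for the full trichotomy I would ultimately refer to Rabinowitz's original treatment.
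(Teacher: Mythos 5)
The paper does not actually prove this theorem: immediately after the statement it refers the reader to Rabinowitz's original article (\cite[page 412]{MR0463990}), so there is no in-paper argument to compare yours against, and citing the source — as you yourself suggest — is all that the application in Theorem \ref{thm:KondoModelNonlinearExistence} requires. That said, your sketch does follow the skeleton of the classical proof: the variational reformulation $I_\lambda(v)=I(v)-\tfrac{\lambda}{2}\|v\|^2$, the Lyapunov--Schmidt reduction onto $N=\ker(L-\mu I)$ (legitimate precisely because $\mu$ is isolated and of finite multiplicity, so $L-\mu I$ is invertible on $N^{\perp}$), and the computation $D^2_z g(\lambda,0)=(\mu-\lambda)\,\mathrm{Id}_N$, which is the engine of the whole argument. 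Your contradiction step via homotopy invariance of critical groups is a legitimate modern variant (it needs, and gets from the contradiction hypothesis, uniform isolation of the trivial critical point across the degenerate value $\lambda=\mu$); Rabinowitz himself argues instead by a direct minimax comparison of level sets, so this is a genuinely different finishing move for the same reduction.

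The weak point is the trichotomy. Alternative (1) concerns nontrivial solutions at the single parameter value $\lambda=\mu$, while (2) and (3) concern $\lambda\neq\mu$; your phrase ``if they occur for $\lambda$ on both sides of $\mu$'' conflates the two. The actual case analysis assumes the solutions at $\lambda=\mu$ do \emph{not} accumulate at $0$ and then splits according to whether $0$ is a local extremum of the reduced functional $g(\mu,\cdot)$: if it is, a minimax count produces two distinct nontrivial critical points for all $\lambda$ on one side of $\mu$ (alternative (2)); if it is not, a deformation argument produces at least one for all nearby $\lambda$ (alternative (3)). Note also that the paper's alternative (1) as printed (``is and isolated solution'') is a typo for Rabinowitz's ``is \emph{not} an isolated solution'', which reverses the logic of which alternative one assumes to fail. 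Since you defer these details to Rabinowitz anyway, your proposal is an acceptable sketch of the bifurcation claim itself, but it should not be presented as a self-contained proof of the full statement.
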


\noindent
We are now ready to prove the existence of stationary nontrivial solutions for problem \eqref{eq:NonlinearWeakSolution}. 

\begin{proof}[Proof of Theorem \ref{thm:KondoModelNonlinearExistence}]
We define operators $L$ and $H$
\begin{equation*}
L({v}) = \Big(\frac{a}{\lambda_j} + d \Big){v}  \quad \text{and} \quad H({v}) =  f({v}) - a\frac{1}{\lambda_j} {v}.
\end{equation*}
The equation \eqref{eq:NonlinearWeakSolution} can be rewritten as
\begin{nalign}
	\label{eq:BifurcationForm}
	\langle \overline{v}, \varphi \rangle_Q = \int_\Omega L(\overline{v})\varphi + H(\overline{v})\varphi  \quad \text{for} \quad \varphi \in D(Q).
\end{nalign}
Let us define the nonlinear functional $I(v):D(B) \to \R$
\begin{nalign}
	I(v) = \frac{\frac{a}{\lambda_k} + d}{2} \int_\Omega v^2 + \int_\Omega \left(F(v) - \frac{a}{2\lambda_k}  \cdot v^2 \right). 
	\label{eq:KondoModelNonlinearFunctionalBifurcation}
\end{nalign}

	Let $I(v)$ be as in \eqref{eq:KondoModelNonlinearFunctionalBifurcation} then $DI(v) = Lv + H(v)$. From Lemma \ref{thm:C2functional} we obtain that functional $I\in C^2(D(Q),\R)$. To prove that $H(u) = o(\|v\|)_{D(Q)}$, we need to check that if $\|v\|_{D(Q)} \to 0$, then
	\begin{nalign}
		\frac{\|H(v)\|_{D(Q)}}{\|v\|_{D(Q)}} \to 0.
	\end{nalign}
	Since $f(0) = 0$ we obtain
	\begin{nalign}
		\frac{\|f(v) - a\frac{1}{\lambda_k} \cdot v\|_{D(Q)}}{\|v\|_{D(Q)}} = \frac{\| f(v) -  f(0) - a\frac{1}{\lambda_k} v\|_{D(Q)}}{\|v\|_{D(Q)}}.
	\end{nalign}
	The right hand side tends to the derivative of $f$ if $\|v\|_{D(Q)} \to 0$. It follows from the assumption that the numerator tends to $0$. 
	
	Now we apply Theorem \ref{thm:RabinowitzBiffurcationTheorem} to obtain that $(1,0)$ is a bifurcation point for system \eqref{eq:BifurcationForm}. It follows that there exist a sequence of $\lbrace c_n\rbrace_{n=1}^\infty$ convergent to $1$ and a sequence of nonconstant functions $\lbrace v_n\rbrace_{n=1}^\infty\subset D(Q)$ such that
	\begin{nalign}
		\label{eq:KondoModelNonlinearStationaryReducedBiffurcated}
		c_n \langle \overline{v}_n,\varphi \rangle_Q = \int_\Omega \left(\frac{a}{\lambda_j} + d \right) \overline{v}_n \varphi + \int_\Omega \left(f(\overline{v}_n) - \frac{a}{\lambda_j} \overline{v}_n \right) \varphi
	\end{nalign}
	for each $n\in \N$ and each $\varphi \in D(Q)$. The equation \eqref{eq:KondoModelNonlinearStationaryReducedBiffurcated} is a weak solution of 
	\begin{nalign}
		0 =& -a d_n \overline{u}_n  + f(T\overline{u}_n) +d(1-d_n) T\overline{u}_n.
	\end{nalign}
\end{proof}

\subsection{Existence of solutions using the Schauder fixed point theorem}
\label{chap:SchauderTheorem}

  There exists a solution to equation \eqref{def:StationarySolutionBifurcation} if $\overline{u}$ is a fixed point of the compact operator
\begin{nalign}
	\label{eq:MappingDefinition}
	\frac{1}{a}f\big(T(\cdot)\big): L^2(\Omega) \to L^2(\Omega)
\end{nalign}
which we obtain from the Schauder fixed point theorem. Obviously $\overline{u}\equiv 0$ is a fixed point of this operator in the case of $f$ given by formula \eqref{eq:FunctionFDefiniotonNumerical}. To ensure the existence of nonconstant solutions, we construct particular invariant sets $B$ for this mapping which do not contain constant functions. We introduce appropriate sets for three main classes of convolution kernels: nonnegative kernels and sign changing kernels in Theorem \ref{thm:KondoModelSchauderNonNegative} and nonpositive kernels in Theorem \ref{thm:KondoModelNegativeKernelsSchauder}.

\begin{proof}[Proof of Theorem \ref{thm:KondoModelSchauderNonNegative}] Let the condition \eqref{prop:Positive} in Theorem \ref{thm:KondoModelSchauderNonNegative} hold true. We introduce 
	\begin{nalign}
	\label{eq:KondoModelSchauderB1spaceDefinition1}
	B_1 = \left\lbrace u \in L^2(\Omega): 
	u(x) =
	\begin{cases} 
		 -\frac{1}{a} \; &{\rm for} \; x\geqslant 2,\\ 
		 \frac{1}{a} \;\; &{\rm for} \;\; x\leqslant -2, 
	\end{cases} \;\; u {\rm  \; is\; odd \; and \; monotone} \right\rbrace.
	\end{nalign}
	Let us show that $\frac{1}{a}f(T): B_1 \to B_1$. Indeed, if $x \geqslant 2$ then
	\begin{nalign}
		Tu(x) = \int_\Omega K(x-y)u(y) \dy \leqslant \int_2^L K(x-y)u(y) \dy = -\frac{1}{a}\int_0^2 K(y)\dy \leqslant -1
	\end{nalign}
	and hence $\frac{1}{a} f(Tu)(x) = -\frac{1}{a}$. Analogously we obtain that for $x\leqslant -2$ we have $\frac{1}{a} f(Tu)(x) = \frac{1}{a}$. If $|x| <2$ then the convolution of odd and monotone function with positive and even function is odd and monotone. Thus, mapping \eqref{eq:MappingDefinition} has a fixed point in the set $B_1$.
	
Notice that, in the set $B_1$ the width of the gap between levels $\frac{1}{a}$ and $-\frac{1}{a}$ is equal to the support of the kernel. In the next step, we consider smaller gaps. Under the condition \eqref{prop:Nonincreasing} in Theorem \ref{thm:KondoModelSchauderNonNegative} let 
	\begin{nalign}
	\label{eq:KondoModelSchauderB1spaceDefinition2}
	B_2 = \left\lbrace u \in L^2(\Omega): 
	u(x) =
	\begin{cases} 
		 -\frac{1}{a} \; &{\rm for} \; x\geqslant 1,\\ 
		 \frac{1}{a} \;\; &{\rm for} \;\; x\leqslant-1, 
	\end{cases} \;\; u {\rm  \; is\; odd \; and \; monotone} \right\rbrace.
\end{nalign} 
	We show that $\frac{1}{a}f(T): B_2 \to B_2$. For $1\leqslant x\leqslant 4$ we have
	\begin{align*}
	Tu(x) =  \int_{x}^{L} K(x-y)u(y)\dy + \int_{x-4}^x K(x-y)u(y)\dy \leqslant -1 + \int_{x-4}^x K(x-y)u(y)\dy.
	\end{align*}
	Notice that $\int_{x-4}^x K(x-y)u(y)\dy \leqslant 0$. Indeed, if $ 1 \leqslant x \leqslant 2$ then $u(x-2) \geqslant 0$. Since $K_+$ is nonincreasing for positive arguments and $u$ is monotone we have 
	\begin{nalign}
	\int_{x-2}^x K_{+}(x-y)u(y)\dy \leqslant 0 \text{ and } \int_{x-4}^{x-2} K_-(x-y)u(y)\dy \leqslant u(x-2)\int_{x-4}^{x-2} K_-(x-y)dy \leqslant 0.
	\end{nalign}
	Analogously, if $2< x \leqslant 4$ then $u(x-2) \leqslant 0$ and 
	\begin{nalign}
	\int_{x-4}^x K(x-y)u(y)dy \leqslant u(x-2)\int_{-4}^0 K(y)\dy \leqslant 0.
	\end{nalign}	
	If $4 < x \leqslant L$ then $\int_x^L K(x-y)u(y)\dy \leqslant 0$ and hence
	\begin{nalign}
		Tu(x) = \int_{x-4}^x K(x-y)u(y)\dy + \int_x^L K(x-y)u(y)\dy \leqslant -1.
	\end{nalign}
	Consequently we obtain $\frac{1}{a} f(Tu)(x) = -\frac{1}{a}$. The case $x\leqslant -1$ is proved analogously.
	For $|x|<1$ we have 
	\begin{nalign}
		Tu(x) =&  \int_\Omega K_+(x-y)u(y) \dy - \frac{1}{a} \int_{x +2}^{x+4} K_-(x-y) \dy + \frac{1}{a} \int_{x-4}^{x-2} K_-(x-y) \dy \\ = &\int_\Omega K_+(x-y)u(y) \dy
	\end{nalign}
	and hence the negative part of the kernel can be omitted. The convolution of odd and monotone function with positive function is odd and monotone. By the Schauder fixed point theorem, there exists a fixed point of mapping \eqref{eq:MappingDefinition} in the set $B_2$.
\end{proof}
\begin{remark}
	The functions on Fig. \ref{fig:KondoModelSimulationsSchauderTheorem1D} corresponding to kernels $K_1$, $K_2$, $K_3$ belong to the sets $B_1$ and $B_2$ with $a=1$. 
\end{remark}
\noindent
By the same reasoning we immediately obtain the family of nonconstant solutions. Under the assumptions of Theorem \ref{thm:KondoModelSchauderNonNegative}, we can obtain, following more general result.
\begin{remark}
	Let the assumptions of Theorem \ref{thm:KondoModelSchauderNonNegative} holds true. Let $\lbrace \Omega_j \rbrace_{j=1}^N$ be a family of disjoint intervals satisfying $\Omega_j \subset \Omega$ and  
	\begin{itemize}
		\item ${\rm diam}(\Omega_j) \geqslant M$, for each $j$,
		\item ${\rm dist}(\Omega_j, \Omega_k) \geqslant M$, for each $k \neq j$
	\end{itemize}
	where $M = 4$ if condition \eqref{prop:Positive} holds true and $M=2$ if condition \eqref{prop:Nonincreasing} is satisfied. There exists a stationary solution $\overline{u}$ such that  $\left.\overline{u}\right|_{\Omega_j} = i_j$ for an arbitrary sequence $\lbrace i_j \rbrace_{j=1}^N$ satisfying $i_j = \pm \frac{1}{a}$.
\end{remark}

\begin{proof}[Proof of Theorem \ref{thm:KondoModelNegativeKernelsSchauder} ]
	We introduce set 
	\begin{nalign}
		\label{eq:KondoModelSchauderB1spaceDefinition3}
		B_3 = \left\lbrace u \in L^2(\R): 
		\begin{matrix*}[l]u(x) = 
		\begin{cases} 
			\frac{1}{a} \; &{\rm for} \;  x\in [-1+8k,\;1+8k],\\ 
			 -\frac{1}{a} \;\; &{\rm for} \; x\in [3+8k,\;5+8k], 
		\end{cases} \\
  	 u \text{ is even, satisfies $u(x) = -u(x+4)$ and } \\ 
  	 	\text{monotone on each interval $[1+4k, \; 3+4k]$},
  	 \end{matrix*} \; \; k\in \N
		\right\rbrace
	\end{nalign}
	We show that $\frac{1}{a}f(T): B_3 \to B_3$. If $x\in [-1,\;1]$ (similarly for $x\in [-1+8k,\;1+8k]$), we have
	\begin{nalign}
		\label{eq:NegativeKernelsOperator}
		Tu(x)   = \int_\Omega K(x-y)u(y)\dy &=  \int_{x-4}^{-3} K_-(x-y)u(y) \dy + \int_{-3}^{x-2} K_-(x-y)u(y) \dy \\ &+  \int_{x+2}^{3} K_-(x-y)u(y) \dy + \int_{3}^{x+4} K_-(x-y)u(y) \dy.
	\end{nalign} 
	Function $u$ is constant on the intervals $[x-4,\, -3]$ and $[3,\,x+4]$ hence
	\begin{nalign}
		\int_{x-4}^{-3} K_-(x-y)u(y) \dy + \int_{3}^{x+4} K_-(x-y)u(y) \dy = 1. 
	\end{nalign}
	Notice that the sum of second and third integral in equation \eqref{eq:NegativeKernelsOperator} is nonnegative. Indeed, from the symmetry of $u$ and $K$ we have $\int_1^3 K(y+1)u(y)dy = 0$, hence
	\begin{nalign}
		\int_{-3}^{x-2} &K_-(x-y)u(y) \dy + \int_{x+2}^{3} K_-(x-y)u(y) \dy \\ 
		&= -\int_{1}^{x+2} K_-(x+4-y)u(y) \dy + \int_{x+2}^{3} K_-(x-y)u(y) \dy \\
		& = \int_1^3 \big( K_-(x-y) - K_-(x-2-y) \big)	u(y) \dy \geqslant  0.
	\end{nalign}
	Consequently we obtain $\frac{1}{a} f(Tu)(x) = \frac{1}{a}$. The case $x\in [3 + 8k,\;5+ 8k]$ is proved analogously. If $x\in [1,\; 3]$ (similarly $x\in [1+4k,\, 3+4k]$) then
	\begin{nalign}
		Tu(x) = \int_{x-4}^{x-2} K(x-y) u(y)\dy + \int_{x+2}^{x+4} K(x-y) u(y)\dy.
	\end{nalign} 
	Notice that $u(x)$ is nondecreasing for $x\in [-3,\,1]$ and $x\in[3,\,7]$. Thus, the convolution of monotone function with negative function is nonincreasing. Since $u$ and $K$ are even functions, then $Tu$ is even and satisfies
	\begin{nalign}
		-Tu(x+4)  = -\int_\R K(x - y)u(y-4)\dy = \int_\R K(x - y)u(y)\dy= Tu(x). 
	\end{nalign}
	 By the Schauder fixed point theorem, there exists a fixed point of mapping \eqref{eq:MappingDefinition} in the set $B_3$.     
\end{proof}

\end{document}